\colorlet{darkishRed}{red!80!black}
\colorlet{darkishBlue}{blue!60!black}
\colorlet{darkishGreen}{green!60!black}
\renewcommand{\PrintDOI}[1]{\doi{#1}}
\let\sm=\setminus
\renewcommand{\leq}{\leqslant}
\renewcommand{\geq}{\geqslant}
\renewcommand{\ge}{\geq}
\renewcommand{\le}{\leq}
\newcommand{\rminor}{\succcurlyeq}
\let\rho=\varrho
\let\phi=\varphi
\renewcommand{\subset}{\subseteq}
\newcommand{\se}{\subset}
\newcommand{ \N } { \mathbb{N} }
\def\calCommandfactory#1{%
   \expandafter\def\csname c#1\endcsname{\mathcal{#1}}}
\def\frakCommandfactory#1{%
   \expandafter\def\csname frak#1\endcsname{\mathfrak{#1}}}
\newcounter{ctr}
  \edef\X{\@Alph\c@ctr}
  \edef\Y{\@alph\c@ctr}
\renewcommand{\cC}{\mathscr{C}}
 \def\lowfwd #1#2#3{{\mathop{\kern0pt #1}\limits^{\kern#2pt\raise.#3ex
 \vbox to 0pt{\hbox{$\scriptscriptstyle\rightarrow$}\vss}}}}
 \def\lowbkwd #1#2#3{{\mathop{\kern0pt #1}\limits^{\kern#2pt\raise.#3ex
 \vbox to 0pt{\hbox{$\scriptscriptstyle\leftarrow$}\vss}}}}
 \def\fwd #1#2{{\lowfwd{#1}{#2}{15}}}
 \def\vS{{\vec S}_{\aleph_0}}
 \def\vS{{\hskip-1pt{\fwd S3}\hskip-1pt}}
 \def\ve{\kern-1.5pt\lowfwd e{1.5}2\kern-1pt}
 \def\ev{\kern-1pt\lowbkwd e{0.5}2\kern-1pt}
 \def\vf{\kern-2pt\lowfwd f{2.5}2\kern-1pt}
 \def\vsdash{{\mathop{\kern0pt s\lower.5pt\hbox{${}% logically \v(e')
     \scriptstyle'$}}\limits^{\kern0pt\raise.02ex
     \vbox to 0pt{\hbox{$\scriptscriptstyle\rightarrow$}\vss}}}}
 \def\svdash{{\mathop{\kern0pt s\lower.5pt\hbox{${}% logically \v(e')
     \scriptstyle'$}}\limits^{\kern0pt\raise.02ex
     \vbox to 0pt{\hbox{$\scriptscriptstyle\leftarrow$}\vss}}}}
 \def\vrdash{{\mathop{\kern0pt r\lower.5pt\hbox{${}% logically \v(e')
     \scriptstyle'$}}\limits^{\kern0pt\raise.02ex
     \vbox to 0pt{\hbox{$\scriptscriptstyle\rightarrow$}\vss}}}}
 \def\rvdash{{\mathop{\kern0pt r\lower.5pt\hbox{${}% logically \v(e')
     \scriptstyle'$}}\limits^{\kern0pt\raise.02ex
     \vbox to 0pt{\hbox{$\scriptscriptstyle\leftarrow$}\vss}}}}
 \def\vSd{{\mathop{\kern0pt S\lower-1pt\hbox{${}% logically \v(e')
      \scriptstyle'$}}\limits^{\kern2pt\raise.1ex
      \vbox to 0pt{\hbox{$\scriptscriptstyle\rightarrow$}\vss}}}}
\newtheorem{theorem}{Theorem}[section] 
\newtheorem{proposition}[theorem]{Proposition}%[section]
\newtheorem{corollary}[theorem]{Corollary}
\newtheorem{lemma}[theorem]{Lemma}
\newtheorem{mainresult}{Theorem} 
\newtheorem{mainconjecture}[mainresult]{Conjecture}
\newtheorem{claim}{Claim}[theorem]
\newenvironment{cproof}{\paragraph{\textit{Proof of Claim.}}}{\phantom{!}\!\hfill$\diamondsuit$}
\theoremstyle{definition}
\newtheorem{example}[theorem]{Example}
\theoremstyle{remark}
\newtheorem*{ack}{Acknowledgement}
\title[Characterising 4-tangles through a connectivity property]{Characterising 4-tangles through\\a connectivity property}
\author{Johannes Carmesin${}^\clubsuit$}
\author{Jan Kurkofka${}^\clubsuit$}
\thanks{${}^\clubsuit$University of Birmingham, Birmingham, UK, funded by EPSRC, grant number EP/T016221/1}
\keywords{4-tangle, connectivity, internally 4-connected, graph minor, decomposition}
\subjclass[2020]{05C83, 05C40, 05C05, 05C10}
\begin{document}
\vspace*{-1.0cm}
\begin{abstract}
Every large $k$-connected graph-minor induces a $k$-tangle in its ambient graph.
The converse holds for $k\le 3$, but fails for $k\ge 4$.
This raises the question whether `$k$-connected' can be relaxed to obtain a characterisation of $k$-tangles through highly cohesive graph-minors.
We show that this can be achieved for $k=4$ by proving that internally 4-connected graphs have unique 4-tangles, and that every graph with a 4-tangle $\tau$ has an internally 4-connected minor whose unique 4-tangle lifts to~$\tau$.
\end{abstract}
\maketitle

\vspace*{-.5cm}
\section{Introduction}

Tangles have been introduced by Robertson and Seymour~\cite{GMX} as an abstract notion that unifies many established concrete notions of what a highly cohesive substructure somewhere in a graph could be.
Curiously, the converse is open: there is no established concrete notion for high cohesion such that every tangle is induced by a highly cohesive substructure.

It is well-known that 2-connectivity characterises 2-tangles, and 3-connectivity characterises 3-tangles, in the following sense.
Recall that a \emph{graph-property} is a class of graphs closed under isomorphism.
A graph-property \emph{characterises} $k$-tangles for a $k\in\N$ if, on the one hand, every graph with the property has a unique $k$-tangle, and on the other hand, for every $k$-tangle $\tau$ in a graph $G$ there is a graph-minor $H$ of $G$ which exhibits the property and such that the unique $k$-tangle in~$H$ lifts to~$\tau$; see \cref{sec:TnT} for the definition of \emph{lifts}.
Graphs with the cube as a contraction-minor show that 4-connectivity fails to characterise 4-tangles, and more generally $k$-connectivity fails to characterise $k$-tangles for $k\ge 4$.

As our main result, we show that 4-tangles are characterised by a relaxation of 4-connectivity known as internal 4-connectivity.
A~graph $G$ is \emph{internally 4-connected} if it is 3-connected, has more than four vertices, and every 3-separator of $G$ is independent and separates only one vertex from the rest of~$G$.

\begin{mainresult}\label{main}
Internal 4-connectedness characterises 4-tangles.
\end{mainresult}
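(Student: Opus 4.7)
The plan is to separate the two defining conditions of a characterisation: (i) every internally 4-connected graph has a unique 4-tangle, and (ii) every 4-tangle in a graph $G$ is lifted by the unique 4-tangle of some internally 4-connected minor of $G$.

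For (i), let $G$ be internally 4-connected. I would define a canonical orientation of all separations of $G$ of order at most $3$ by pointing each one toward its larger side, which is well-defined by internal 4-connectivity: for a $3$-separation $(A,B)$, exactly one of $A$ or $B$ is of the form ``separator plus a single further vertex''. To verify the tangle axioms I would show that no three small sides cover $V(G)$; since each small side has at most four vertices and the three separators are independent, the combinatorics (together with $3$-connectivity and $|V(G)|>4$) should rule out such a cover. Uniqueness is then essentially forced: reversing the orientation of any single $3$-separation places a four-vertex set in the ``big'' role, and one can then pick two further small sides that, together with this four-vertex set, exhaust $V(G)$, contradicting the tangle axiom.

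For (ii), I would use a Carmesin-style tree-decomposition of adhesion at most $3$. Starting from $G$ and its $4$-tangle $\tau$, first reduce to the $3$-connected block on which $\tau$ lives via the Tutte decomposition along separations of order at most $2$; the tangle $\tau$ restricts to a $4$-tangle on this block. Then decompose this $3$-connected block further along the $3$-separations forbidden by internal 4-connectivity, namely those with a dependent separator or those isolating more than one vertex on a side. The resulting tree-decomposition has torsos that are internally 4-connected by construction, and since $\tau$ orients every separation of order at most $3$, it orients every edge of the decomposition tree and thereby selects a unique node $t$. The plan is then to take the torso $H_t$ at $t$ and realise it as a minor of $G$, so that the canonical $4$-tangle of $H_t$ provided by (i) lifts to $\tau$.

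The main obstacle, I expect, is the passage from the torso $H_t$ to an actual minor of $G$: each virtual edge added to $H_t$ must be replaced by a path or a branch-set routed through the corresponding removed part of $G$, and this routing demands enough connectivity on the far side of every adhesion. Extracting that connectivity from the tangle's orientation of the relevant $3$-separations --- uniformly and compatibly across all adhesions --- is the technical heart of the argument. The lifting statement should follow once a clean correspondence is established between the $3$-separations of $H_t$ and the $\tau$-oriented $3$-separations of $G$ that they induce.
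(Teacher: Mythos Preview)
Your outline tracks the paper's strategy, but part~(ii) has a genuine gap that the paper must work to close.

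You assume that the torso $H_t$ at the node selected by $\tau$ is always internally $4$-connected. It need not be: the torso can be $K_4$, and $K_4$ has no $4$-tangle (three of its four triangles cover it). The paper's decomposition theorem (\cref{greedyTangleProperties}) treats this case separately: when the bag $X$ has $|X|\le 4$, one shows that $V(X)$ is \emph{cubic} in~$G$ (meaning the components of $G\setminus X$ realise all four $3$-subsets of $X$ as neighbourhoods), and then the internally $4$-connected minor witnessing $\tau$ is not the torso but the \emph{cube}, obtained via a standard cube-minor at~$X$ (\cref{cubicTanglesInFourSets}). This case is unavoidable---take $G$ to be the cube itself---so your scheme of ``read off the torso and realise it as a minor'' cannot work as stated.

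Two further remarks. In~(i), the tangle axiom forbids $G[A_1]\cup G[A_2]\cup G[A_3]=G$, not $A_1\cup A_2\cup A_3=V(G)$; counting that each small side has at most four vertices is not enough. The paper instead counts edges: each $G[A_i]$ is a triangle or a claw (so $\le 3$ edges), giving $|E(G)|\le 9$, which together with $\delta(G)\ge 3$ forces $G=K_{3,3}$, contradicting internal $4$-connectivity. For~(ii), the obstacle you flag---realising the torso as a minor of~$G$---is exactly why the paper does \emph{not} decompose along arbitrary ``bad'' $3$-separations (which are not nested anyway). It introduces \emph{claw-freeable} nested sets (``$3$-chops''): an inductive argument (\cref{chopExtension}) shows their torsos are faithful minors, and maximality then forces the large torsos to be internally $4$-connected (\cref{largeTorsoInt4con}). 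Your proposed collection of separations is neither nested nor equipped with any analogue of this condition, so the routing argument you anticipate would not go through.
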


\begin{mainconjecture}
For every $k\in\N$ there is a connectivity-property that characterises $k$-tangles.
\end{mainconjecture}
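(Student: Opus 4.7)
The plan is to extend the strategy of \cref{main} to every~$k$ by identifying, for each $k$, a suitable relaxation of $k$-connectivity that plays the role internal $4$-connectivity plays at $k=4$. The natural template is to take $(k-1)$-connectivity together with a triviality condition on all $(k-1)$-separators. A first candidate definition is: $G$ is \emph{internally $k$-connected} if $G$ is $(k-1)$-connected, $|V(G)| > c_k$ for some constant $c_k$, and for every $(k-1)$-separator $S$ of~$G$, the set $S$ is independent and $G-S$ has exactly one component of size exceeding a further constant $d_k$. Before committing to this definition, I would calibrate $c_k$ and $d_k$ against the small cases $k=5,6$, whose obvious test obstructions are the higher-dimensional analogues of the contracted cube used in the introduction.

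With a candidate property $P_k$ in hand, the \emph{uniqueness} direction becomes the assertion that every $P_k$-graph has exactly one $k$-tangle. The guiding idea, abstracting from the $k=4$ case, is that in a $P_k$-graph every low-order separation has an unambiguous `big side', so that the collection of these big sides is forced to satisfy the tangle axioms and no alternative orientation of the separations can.

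The \emph{existence} direction---producing, from an arbitrary $k$-tangle $\tau$ in a graph $G$, an internally $k$-connected minor whose unique $k$-tangle lifts to $\tau$---is where I expect the main obstacle to lie. The approach would be to start from a tangle-tree decomposition of $G$ along low-order separations, identify the part pointed at by $\tau$, and iteratively contract everything else down to a minor in which only trivial $(k-1)$-separators survive. The delicate issue is that a single contraction can create new non-trivial $(k-1)$-separators, so one must either work along a well-founded ordering of parts or, more ambitiously, establish a tangle-compatible tree-decomposition theorem strong enough to eliminate all non-trivial $(k-1)$-separators simultaneously. Proving this kind of structural result for general $k$, rather than guessing the property $P_k$, is likely to be the crux of the conjecture.
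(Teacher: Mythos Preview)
The statement you are addressing is not a theorem but a \emph{conjecture}: the paper states it as \texttt{mainconjecture} and offers no proof. There is therefore nothing in the paper to compare your proposal against.

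Your proposal is not a proof either, and you acknowledge this yourself: you describe a research \emph{plan}, not an argument. You propose a candidate property $P_k$ depending on unspecified constants $c_k,d_k$ to be calibrated later, sketch why uniqueness might hold, and then explicitly identify the existence direction as ``where I expect the main obstacle to lie'' and ``likely to be the crux of the conjecture''. A proof cannot end by naming the part that remains to be done. Concretely, the step ``iteratively contract everything else down to a minor in which only trivial $(k-1)$-separators survive'' is precisely the content of the conjecture; you have restated it, not discharged it. Even for $k=4$ the paper needs the full machinery of \cref{greedyTangleProperties} to carry this out, and that argument relies on features specific to $3$-separations (the claw dichotomy of \cref{3sepClawChar}, the cubic analysis of \cref{cubicTanglesInFourSets}) for which you offer no higher-$k$ analogue.
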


The key ingredient for the proof of \cref{main} is a new decomposition theorem that provides a tree-decomposition of every 3-connected graph into torsos that are internally 4-connected or $K_4$ or~$K_3$, and such that distinct 4-tangles live in distinct bags; see \cref{greedyTangleProperties}.
We also show how internal 4-connectedness can be used to prove Kuratowski's theorem.

\subsection*{Related work.}

Over the past years, a rich Theory of Tangles has been established 
\cites{klepper2022clustering,GraphDecomp,albrechtsen2023optimal,albrechtsen2023refining,CarmesinToTshort,CDHH13CanonicalAlg,CDHH13CanonicalParts,confing,CG14:isolatingblocks,Entanglements,ASS,TreeSets,StructuralASS,ProfilesNew,FiniteSplinters,InfiniteSplinters,RefiningToT,GroheTangles3,ComputingTangles,jacobs2023efficiently,GMX,GGW:tangles_in_matroids,ReedConnectivityMeasure,diestel_ultrafilter-tangles,diestel2016tangles,diestel2019tangles,diestel2020tangle}.
Grohe has studied 4-tangles as well~\cite{grohe2016quasi}.
He showed that quasi 4-connectivity\footnote{A graph is \emph{quasi 4-connected} if it is 3-connected, has more than four vertices, and every 3-separation of~$G$ has a side of size~$\le 4$.} characterises 4-tangles, but only up to two counterexamples that can occur in infinitely many graphs \cite{grohe2016quasi}*{Theorem~4.5}.
This inspired us to search for another relaxation of 4-connectivity that might work without any restrictions whatsoever.
Internal 4-connectivity is an established relaxation of 4-connectivity, lying in between quasi 4-connectivity and 4-connectivity, which achieves just that.

Our decomposition result \cref{greedyTangleProperties} is similar to Grohe's~\cite{grohe2016quasi}.
There are two differences worth pointing out.
Grohe shows that the torsos he obtains are quasi 4-connected or $K_4$ or $K_3$, while we obtain internally 4-connected torsos or $K_4$ or $K_3$, and internally 4-connected graphs are quasi 4-connected.
The other difference is that our decomposition arises from a short greedy construction\footnote{A 3-separation $\{A,B\}$ of $G$ is \emph{claw-free} if neither $G[A]$ nor $G[B]$ is a claw aka~$K_{1,3}$. A set $S$ of proper 3-separations is \emph{claw-freeable} if $S$ can be enumerated as $s_0,s_1,\ldots,s_n$ such that for every index~$i$ either $s_i$ is claw-free or there is $j<i$ such that the separators of $s_j$ and $s_i$ intersect in at least two vertices. 
Take the tree-decomposition defined by an inclusionwise maximal claw-freeable nested set of 3-separations -- that's it.},
while Grohe's exciting construction is far more involved.

We unequivocally stress that \cref{greedyTangleProperties} is much weaker than the tri-separation decomposition from~\cite{Canonical3arXiv}.
The decompositions of \cref{greedyTangleProperties} can be obtained as refinements of the tri-separation decomposition with some effort, but it is not possible to obtain the tri-separation decomposition from \cref{greedyTangleProperties}.
This is because the tri-separation decomposition is \emph{canonical}, meaning that it is invariant under graph-automorphisms, and because it admits an \emph{explicit} description that uniquely determines it for every graph.
The tri-separation decomposition has applications for which \cref{greedyTangleProperties} is not suited because the decompositions provided by \cref{greedyTangleProperties} are neither canonical nor explicit.
These applications include Connectivity Augmentation, Cayley graphs or Parallel Computing.

We introduce \cref{greedyTangleProperties} as a handy tool for those situations where canonicity or an explicit description are not crucial.
Of course, we could also use~\cite{grohe2016quasi} instead of \cref{greedyTangleProperties}, but we like the alternative construction we have for \cite{grohe2016quasi} and think it is worth sharing.
We would also like to mention that an extension to 4-connectivity of the tri-separation decomposition~\cite{Canonical3arXiv} has recently been found~\cite{Tutte4con}.

For matroids, a version of \cref{main} has been proved in parallel and independently from our project, by Brettell, Jowett, Oxley, Semple and Whittle~\cite{4connectedMatroids}.
They use weak 4-connectivity, which is not as strong as internal 4-connectivity but optimal in the setting of matroids.
There are differences in our approaches and the results do not imply each other in an obvious way.
\medskip

This paper is organised as follows.
\cref{sec:TnT} investigates two substructures in graphs that induce 4-tangles.
In~\cref{sec:GreedyDecomp}, we prove the decomposition result \cref{greedyTangleProperties}.
This is used at the end of the section to deduce \cref{main}.
In \cref{sec:Universality}, we show that the relevant torsos of the tree-decompositions provided by \cref{greedyTangleProperties} are unique up to isomorphism.
In \cref{sec:Kuratowski}, we provide a proof of Kuratowski's theorem that uses internal 4-connectedness.

\section{Tangles from minors}\label{sec:TnT}

For terminology regarding graphs, minors, separations and tangles, we follow~\cite{DiestelBookCurrent}.
A separation $\{A,B\}$ is \emph{proper} if $A\sm B\neq\emptyset\neq B\sm A$.
We refer to $K_{1,3}$ as a \emph{claw}.
%every 3-separation\footnote{A~pair $\{A,B\}$ such that $A\cup B=V(G)$ and there are no $(A\sm B)$--$(B\sm A)$ edges in~$G$ and $|A\cap B|=3$.} of~$G$ has a side such that the induced subgraph on that side is a claw (aka~$K_{1,3}$) or has only three vertices.

Suppose that $G$ and $H$ are two graphs where $H$ is a minor of~$G$.
Then there are a vertex set $U\subset V(G)$ and a surjection $f\colon U\to V(H)$ such that the preimages $f^{-1}(x)\subset U$ form the branch sets of a model of $H$ in $G$.
A \emph{minor-map} $\varphi\colon G\rminor H$ formally is such a pair $(U,f)$.
Given $\varphi=(U,f)$ we address $U$ as $V(\varphi)$ and we write $\varphi=f$ by abuse of notation.
Usually, we will abbreviate `minor-map' as `map'.

If $\varphi\colon G\rminor H$ and $s=\{A,B\}$ is a separation of~$G$, then $s$ \emph{induces} the separation $\varphi(s):=\{A_\varphi,B_\varphi\}$ of~$H$, where $A_\varphi$ consists of those vertices of $H$ whose branch set contains a vertex of~$A$, and $B_\varphi$ is defined analogously.
The order of $\varphi(s)$ is no larger than the order of~$s$.
For oriented separations~$s$, we define $\varphi(s)$ analogously.
If $\tau$ is a $k$-tangle in~$H$, then the $\varphi$-\emph{lift} of~$\tau$ to~$G$ is the collection of all oriented $({<}\,k)$-separations $s$ of~$G$ with $\varphi(s)\in\tau$.

\begin{lemma}\cite[(6.1)]{GMX}\label{LiftingTangles}
Let $\varphi\colon G\rminor H$.
Then the $\varphi$-lift to~$G$ of every $k$-tangle in~$H$ is a $k$-tangle in~$G$.
\end{lemma}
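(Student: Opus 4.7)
The plan is to verify the defining properties of a $k$-tangle for the putative lift
\[ \tau^* := \{\,s : \varphi(s)\in\tau\,\}, \]
where $s$ ranges over oriented $({<}\,k)$-separations of~$G$.

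First I would check that for every separation $\{A,B\}$ of~$G$ of order $<k$, the image $\{A_\varphi, B_\varphi\}$ really is a separation of~$H$ of no larger order, so that $\tau$ orients it. Surjectivity of~$\varphi$ gives $A_\varphi\cup B_\varphi = V(H)$. An edge $xy\in E(H)$ with $x\in A_\varphi\sm B_\varphi$ and $y\in B_\varphi\sm A_\varphi$ would correspond to an edge of~$G$ between the branch sets of $x$ and~$y$; but those branch sets lie entirely in $A\sm B$ and $B\sm A$ respectively, contradicting that $\{A,B\}$ separates~$G$. For the order bound, each $v\in A_\varphi\cap B_\varphi$ has a connected branch set meeting both $A$ and $B$, so it must meet $A\cap B$; choosing one such witness per~$v$ yields an injection $A_\varphi\cap B_\varphi\hookrightarrow A\cap B$ because branch sets are pairwise disjoint. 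Since $\tau$ orients $\{A_\varphi,B_\varphi\}$, exactly one of the two orientations of $\{A,B\}$ lies in~$\tau^*$.

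Second, and this is the crux, I would verify the tangle axiom. Suppose for contradiction that $(A_i,B_i)\in\tau^*$ for $i=1,2,3$ and $G = G[A_1]\cup G[A_2]\cup G[A_3]$. The goal is to deduce $H = H[A_{1,\varphi}]\cup H[A_{2,\varphi}]\cup H[A_{3,\varphi}]$, contradicting the tangle axiom in~$H$ applied to $(A_{i,\varphi}, B_{i,\varphi})\in\tau$. Every vertex $v\in V(H)$ has a nonempty branch set that meets some~$A_i$, so $v\in A_{i,\varphi}$; this yields the vertex cover. For edges, each $uv\in E(H)$ comes from an edge $xy\in E(G)$ between the branch sets of $u$ and~$v$, and $xy\in E(G[A_i])$ for some~$i$ by the covering hypothesis, so $x,y\in A_i$ forces $u,v\in A_{i,\varphi}$ and hence $uv\in E(H[A_{i,\varphi}])$.

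The only genuine subtlety lies in the second part: the tangle axiom concerns the edge-subgraphs $G[A_i]$ covering~$G$, not merely a vertex cover, and one has to make sure this stronger property propagates through branch sets rather than getting destroyed by vertices whose branch sets straddle several $A_i$. This is exactly why the edge step above invokes the edge-covering hypothesis instead of only the vertex cover; once that is in hand, the argument closes immediately.
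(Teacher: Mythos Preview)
Your proof is correct and follows essentially the same approach as the paper's, with more detail on why $\varphi(s)$ is a separation of no larger order (which the paper states just before the lemma). The one point you gloss over, and which the paper treats explicitly, is the degenerate case $A_\varphi=B_\varphi=V(H)$: then both orientations of $\{A,B\}$ would lie in~$\tau^*$, but your own order bound gives $|V(H)|=|A_\varphi\cap B_\varphi|<k$, so $H$ would have no $k$-tangle.
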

\begin{proof}
Let $\tau'$ be the $\varphi$-lift to~$G$ of a $k$-tangle~$\tau$ in~$H$.
If $\tau'$ contains oriented separations $(A_i',B_i')$ for $i\in [3]$ such that $G[A_1']\cup G[A_2']\cup G[A_3']=G$, then $H[A_1]\cup H[A_2]\cup H[A_3]=H$ for $(A_i,B_i):=\varphi(A_i',B_i')$, contradicting the fact that~$\tau$ is a tangle in~$H$.
It remains to show that $\tau'$ does not contain both orientations of the same separation $\{A,B\}$ of~$G$.
Indeed, this could only happen if $\varphi$ sends $\{A,B\}$ to $\{V(H),V(H)\}$, in which case $H$ would have at most $|A\cap B|<k$ vertices; but graphs on less than $k$ vertices have no $k$-tangles, so this cannot happen.
\end{proof}

\subsection{Internal 4-connectivity}\label{sec:Internal4con}

%Recall that a graph~$G$ is \emph{internally 4-connected} if it is 3-connected, every proper 3-separation of $G$ has a side whose induced subgraph is a claw, and $G\notin\{K_4,K_{3,3}\}$.

\begin{lemma}\label{4tangleAvoidsClaw}
Let $\{A,B\}$ be a 3-separation of graph~$G$ such that $|A|=3$ or the induced subgraph $G[A]$ is a claw.
Then every 4-tangle in~$G$ orients $\{A,B\}$ towards~$B$.
\end{lemma}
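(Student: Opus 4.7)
My plan is to argue by contradiction: assume $(B, A) \in \tau$, and produce three elements of $\tau$ whose small sides together cover $G$, violating the tangle axiom. The common ingredient is a simple observation: for any $S \subsetneq V(G)$ with $|S| < 4$, the (improper) separation $\{S, V(G)\}$ has order $|S| < 4$ and so is oriented by $\tau$; and it must be oriented as $(S, V(G)) \in \tau$, since otherwise three copies of $(V(G), S)$ would provide small sides $V(G)$ whose induced subgraphs cover $G$, contradicting the tangle axiom.

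In Case 1, where $|A| = 3$, I apply the observation with $S := A$ (noting $A \subsetneq V(G)$ since a 4-tangle requires $|V(G)| \geq 4$) to obtain $(A, V(G)) \in \tau$. The triple $(B, A), (A, V(G)), (A, V(G))$ then has small sides $B, A, A$ with $G[B] \cup G[A] = G$, because $\{A, B\}$ is a separation; this contradicts the tangle axiom.

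In Case 2, where $G[A]$ is a claw with centre $c$ and leaves $l_1, l_2, l_3$, I cover the claw's edges by two stars: put $S_1 := \{c, l_1, l_2\}$ and $S_2 := \{c, l_3\}$, so that $G[S_1] \cup G[S_2] = G[A]$. The observation places $(S_1, V(G))$ and $(S_2, V(G))$ into $\tau$, and the triple $(B, A), (S_1, V(G)), (S_2, V(G))$ has vertex union $B \cup A = V(G)$ and edge union $G[B] \cup G[A] = G$, again contradicting the tangle axiom.

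The only delicate point is that the argument uses improper separations $\{S, V(G)\}$; this is permissible in Diestel's convention, which the paper follows (as already implicit in the proof of \cref{LiftingTangles}, where separations $\{V(H), V(H)\}$ arise). Granted this, no sub-case analysis within Case 2 is needed: the triple works regardless of whether $c$ lies in $A \cap B$ or $A \setminus B$, since the separation property of $\{A, B\}$ ensures every edge of $G$ lies in either $G[A]$ or $G[B]$.
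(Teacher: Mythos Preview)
Your proof is correct and follows essentially the same approach as the paper's: both arguments cover $G[A]$ by the induced subgraphs on two small vertex sets (the paper uses two $2$-edge paths through the centre, you use one $2$-edge path and one edge) and then invoke the tangle axiom together with $G[A]\cup G[B]=G$. Your handling of Case~1 is slightly more roundabout than the paper's one-line observation that $|A|=3$ forces $G[B]=G$, but the logic is the same.
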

\begin{proof}
Let $\tau$ be a 4-tangle in~$G$.
If $|A|=3$, then $G[B]=G$, so $\tau$ cannot orient $\{A,B\}$ towards~$A$.
Suppose that $G[A]$ is a claw.
Then $G[A]$ can be written as the union of two paths $P_1,P_2$ with two edges.
For both $i=1,2$, the 4-tangle~$\tau$ orients $\{V(P_i),V(G)\}$ towards~$V(G)$.
As $G=G[B]\cup P_1\cup P_2$, the 4-tangle~$\tau$ must orient $\{A,B\}$ towards~$B$.
\end{proof}

\begin{proposition}\label{internalUnique}
Let $G$ be an internally 4-connected graph.
Then every 3-separation of~$G$ has one side that is larger than the other.
Orienting every 3-separation of $G$ towards its largest side defines a 4-tangle in~$G$, and this is the only 4-tangle in~$G$.
\end{proposition}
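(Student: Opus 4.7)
There are three things to check: every 3-separation of $G$ is lopsided, the orientation toward the larger side defines a 4-tangle $\tau$, and $\tau$ is the only 4-tangle.

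For the first, improper 3-separations are handled trivially (one side is the separator, which has at most three vertices, versus $|V(G)|\ge 5$). For a proper 3-separation $\{A,B\}$, internal 4-connectivity isolates a single vertex on, say, $A\sm B$, giving $|A|=4$ and $|B|=|V(G)|-1$; a short check that no 3-connected 5-vertex graph admits an independent 3-separator forces $|V(G)|\ge 6$, and hence $|B|>|A|$.

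The crux is the cover axiom: no three $(A_i,B_i)\in\tau$ satisfy $G[A_1]\cup G[A_2]\cup G[A_3]=G$. The key structural observation is that each small side $A_i$ has $|A_i|\le 4$ and $|E(G[A_i])|\le 3$; improper small sides are just the separator (at most three vertices), while the small side of a proper 3-separation induces a $K_{1,3}$, because the isolated vertex has $G$-degree exactly three and the separator is independent. Assuming a cover would then give $|V(G)|\le 12$ and $|E(G)|\le 9$; combining with the 3-connectivity bound $|E(G)|\ge \lceil 3|V(G)|/2\rceil$ forces $|V(G)|\le 6$. The remaining cases are finished by inspection: $K_5$ has ten edges and cannot be covered, and on six vertices the only 3-regular 3-connected candidates, $K_{3,3}$ and the triangular prism, both fail internal 4-connectivity (the parts of $K_{3,3}$ separate three vertices from each other; the prism has a 3-separator that is not independent).

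For uniqueness, let $\tau'$ be any 4-tangle in $G$. On a proper 3-separation the small side is a claw, so \cref{4tangleAvoidsClaw} forces $\tau'$ to orient toward the large side; on an improper separation $\{A,V(G)\}$ with $A\subsetneq V(G)$, orienting toward $A$ would put $(V(G),A)$ into $\tau'$, and three copies of this triple would trivially cover $G$, so $\tau'$ must orient toward $V(G)$ as well, agreeing with $\tau$.

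The main obstacle is the cover axiom: the counting alone only yields $|V(G)|\le 6$, so the two 3-regular 6-vertex candidates have to be ruled out by hand. This is also where the precise reading of ``separates only one vertex from the rest'' matters — under a looser interpretation $K_{3,3}$ would qualify as internally 4-connected, but its three claws centred on one part cover all nine of its edges and so would defeat the cover axiom.
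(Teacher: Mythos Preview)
Your proof is correct and follows essentially the same route as the paper: bound the edges of a putative cover by nine (each small side has at most three vertices or is a claw), combine with $\delta(G)\ge 3$ to force $|V(G)|\le 6$, and eliminate the small cases, then invoke \cref{4tangleAvoidsClaw} for uniqueness. You are in fact a bit more careful than the paper in explicitly ruling out the prism and the $K_5$ case and in treating improper separations separately, but these are minor elaborations rather than a different approach.
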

\begin{proof}
Let $G$ be an internally 4-connected graph.
Then $G$ has at least six vertices or $G=K_5$.
Hence every 3-separation of $G$ has one side that contains more vertices than the other, and we let $\tau$ orient each 3-separation to its largest side.
Suppose for a contradiction that $\tau$ contains separations $(A_1,B_1),(A_2,B_2),(A_3,B_3)$ such that $G[A_1]\cup G[A_2]\cup G[A_3]=G$.
Since $G$ is internally 4-connected and $|A_i|<|B_i|$, each $G[A_i]$ has only three vertices or is a claw, so $G$ has at most nine edges.
Combining this with the fact that $G$ has minimum degree at least three, we find that $G$ has exactly six vertices and is 3-regular.
Combining 3-regularity with internal 4-connectedness, we find that the neighbourhoods of the vertices of $G$ form independent sets.
We now derive that $G=K_{3,3}$, as follows.
Let $v$ be an arbitrary vertex of~$G$, and let $X$ denote its neighbourhood.
Since $|V(G)|=6$, the graph $G$ has exactly two more vertices $a,b$ besides the vertices in $\{v\}\cup X$.
Since $G[\{v\}\cup X]$ is a claw and $G$ is 3-regular, every vertex in $X$ is adjacent to both $a$ and~$b$.
So $G$ contains a spanning $K_{3,3}$, and by 3-regularity we have $G=K_{3,3}$.
But $G=K_{3,3}$ contradicts that $G$ is internally 4-connected.
Therefore, $\tau$ is a 4-tangle.

Suppose now for a contradiction that $\tau'$ is another 4-tangle in~$G$.
Then $\tau'$ orients some 3-separation $\{A,B\}$ of~$G$ differently than~$\tau$.
Since $G$ is internally 4-connected, one of $G[A]$ or $G[B]$ has only three vertices or is a claw, contradicting \cref{4tangleAvoidsClaw}.
Thus, $\tau$ is unique.
\end{proof}

\subsection{Cubes}\label{sec:Cubes}

The \emph{cube} means the graph that resembles the 3-dimensional cube, see \cref{fig:cubeBip}.
A set $X$ of four vertices in a graph~$G$ is \emph{cubic} in~$G$ if the set of neighbourhoods of the components of $G\sm X$ is equal to~$[X]^3$ (the collection of all subsets of $X$ of size three).

\begin{figure}[ht]
    \centering
    \includegraphics[height=4\baselineskip]{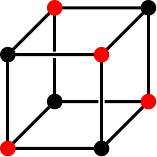}
    \caption{The red and black vertex sets are cubic in the cube}
    \label{fig:cubeBip}
\end{figure}

\begin{example}\label{cubicVXcubeMinor}
We claim that if $X\se V(G)$ is cubic in~$G$, then $G$ contains the cube as a minor. 
Indeed, let us view the cube~$Q$ as a bipartite graph with classes $A$ and~$B$.
For the branch sets of the vertices in $A$ we take the vertex sets of components $C_x$ of $G\sm X$ with neighbourhoods equal to $X-x$, for $x\in X$, while for the vertices in~$B$ we take the singletons $\{x\}$ as branch sets.
\end{example}

Given a cubic vertex set~$X$ in a graph~$G$, we can obtain a minor-map $\varphi\colon G\rminor Q$ to the cube~$Q$ as in \cref{cubicVXcubeMinor}.
We call such minor-maps \emph{standard cube-minors} at~$X$.

\begin{lemma}\label{TangleDirection}
\cite[§12 Excercise~43]{DiestelBookCurrent}
Let $\tau$ be a $k$-tangle in a graph~$G$.
Then for every set $X\se V(G)$ of fewer than $k$ vertices there exists a unique component $C=C(X,\tau)$ of $G\sm X$ such that, for every $({<}\,k)$-separation $\{A,B\}$ of $G$ with $A\cap B=X$, the $k$-tangle~$\tau$ orients $\{A,B\}$ to the side that includes the component~$C$.
\end{lemma}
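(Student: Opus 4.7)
The plan is to view separations $\{A,B\}$ of $G$ with $A\cap B=X$ as precisely the separations obtained by partitioning the set $\cD$ of components of $G\sm X$ into two parts $\cS$ and $\cT$ and setting $A=X\cup V(\cS)$ and $B=X\cup V(\cT)$, where $V(\cS):=\bigcup_{D\in\cS}V(D)$. Each such separation has order at most $|X|<k$, so $\tau$ orients it. For a single component $D\in\cD$ write $s_D:=\{X\cup V(D),V(G)\sm V(D)\}$, and call $D$ a \emph{tangle component} if $\tau$ orients $s_D$ towards~$D$. My strategy is first to show that a unique tangle component $C=C(X,\tau)$ exists, and then to deduce that $\tau$ orients \emph{every} separation $\{A,B\}$ with $A\cap B=X$ towards the side containing~$C$.

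For existence, I would assume for contradiction that no component is a tangle component, and prove by induction on $|\cS|$, for finite $\cS\se\cD$, that $\tau$ orients $s_\cS:=\{X\cup V(\cS),X\cup V(\cD\sm\cS)\}$ away from its $\cS$-side. In the inductive step, the inductive hypothesis for $\cS$, the case hypothesis for a new component $D\notin\cS$, and the negation of the target statement for $\cS\cup\{D\}$ should combine into three oriented separations in $\tau$ whose small sides cover~$G$, contradicting the tangle axiom. Applied with $\cS=\cD$, the conclusion forces $\tau$ to orient $\{V(G),X\}$ towards $X$, which already contradicts the tangle axiom because $G[V(G)]=G$. For uniqueness, if distinct $D_1,D_2$ were both tangle components, then $(V(G)\sm V(D_i),X\cup V(D_i))\in\tau$ for $i=1,2$, and since $V(D_1)\cap V(D_2)=\emptyset$ the two small sides $V(G)\sm V(D_i)$ already cover $G$, yielding a contradiction after duplicating one of the separations. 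The general statement of the lemma then follows by the same template: if $\tau$ oriented $\{X\cup V(\cS),X\cup V(\cT)\}$ towards the $\cT$-side despite $C\in\cS$, the pair $(X\cup V(\cS),X\cup V(\cT))$ and $(V(G)\sm V(C),X\cup V(C))$ in $\tau$ would have small sides covering $G$, giving a final contradiction after duplicating one.

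The hard part, or at least the only step that requires care, is the covering calculation in each application of the tangle axiom: one must confirm that the small sides of a chosen triple of oriented separations in $\tau$ jointly exhaust both the vertices and the edges of~$G$. This will rest on the elementary observation that $G\sm X$ has no edges between distinct components, so every edge of $G$ is contained in $G[X]$ or in $G[X\cup V(D)]$ for a unique $D\in\cD$; once invoked, each covering check reduces to verifying that the chosen subfamilies of components together cover~$\cD$, which is a straightforward bookkeeping step in each of the three cases above.
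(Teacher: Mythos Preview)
The paper does not supply a proof of this lemma; it merely cites it as \cite[§12 Exercise~43]{DiestelBookCurrent}. Your argument is correct and is the standard one. The covering checks all go through as you describe, and the only implicit assumption---that $\cD$ is finite, so that your induction on $|\cS|$ eventually reaches $\cS=\cD$---is harmless in the context of this paper, which works with finite graphs throughout (see for instance the induction on~$|N|$ in the proof of \cref{chopExtension} and the finite enumerations in the definition of `claw-freeable').
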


Recall that a set $\{\,(A_i,B_i):i\in I\,\}$ of (oriented) separations $(A_i,B_i)$ of a graph~$G$ is a \emph{star} if $(A_i,B_i)\le (B_j,A_j)$ for all~$i\neq j\in I$.
The \emph{bag} of this star is $\bigcap_{i\in I}B_i$ (where the empty intersection is set to be~$V(G)$).

\begin{proposition}\label{cubicTanglesInFourSets}
Let $G$ be a 3-connected graph.
Let $\sigma$ be a star of 3-separations of~$G$ such that the bag $X$ of~$\sigma$ has four vertices.
Then the following assertions are equivalent:
\begin{enumerate}
    \item $\sigma$ is included in some 4-tangle in~$G$;
    \item $X$ is cubic in~$G$;
    \item $X$ is cubic in~$G$, and for every standard cube-minor $\varphi\colon G\rminor Q$ at~$X$ the $\varphi$-lift to~$G$ of the unique 4-tangle in~$Q$ includes~$\sigma$.
\end{enumerate}
\end{proposition}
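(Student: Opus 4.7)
The plan is to prove the cycle (ii) $\Rightarrow$ (iii) $\Rightarrow$ (i) $\Rightarrow$ (ii); the first two implications are short and all the real work lies in (i) $\Rightarrow$ (ii).

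For (iii) $\Rightarrow$ (i): immediate, as \cref{LiftingTangles} makes the $\varphi$-lift of $\tau_Q$ a 4-tangle of $G$, and it contains $\sigma$ by (iii). For (ii) $\Rightarrow$ (iii): the cube $Q$ is 3-regular and 3-connected with every 3-separator being the independent neighbourhood of a single vertex, so it is internally 4-connected, and \cref{internalUnique} gives a unique 4-tangle $\tau_Q$ orienting each 3-separation towards its larger side. Given a standard cube-minor $\varphi\colon G\rminor Q$ at $X$ and $s_i=(A_i,B_i)\in\sigma$ with separator $T_i=X-x_i$ and $Y_i=A_i\sm X$, I would directly compute $\varphi(s_i)$. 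The key observation is that, for each $y\neq x_i$, the branch set $C_y$ chosen by $\varphi$ has $x_i\in X\sm A_i$ as a neighbour and so cannot lie inside $Y_i$; hence only $C_{x_i}$ among the four big branch sets of $\varphi$ can intersect $A_i$. This forces $|\varphi(A_i)|\leq 4<7\leq|\varphi(B_i)|$, so $\tau_Q$ orients $\varphi(s_i)$ towards $\varphi(B_i)$ and $s_i$ lies in the $\varphi$-lift.

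For (i) $\Rightarrow$ (ii), I would argue by contradiction. Suppose $X$ is not cubic, so some 3-subset $T=X-x$ fails to be the neighbourhood of any component of $G\sm X$. By 3-connectivity, every such component has a 3-subset of $X$ as its neighbourhood, and the assumption forces $x$ to lie in every such neighbourhood, hence in every $T_i$ of $\sigma$. Put $\mathcal{T}:=\{T_i:i\}\sbe\{X-y:y\in T\}$ and $m:=|\mathcal{T}|$; the case $\sigma=\emptyset$ would give $G=K_4$, which has no 4-tangle, so $m\in\{1,2,3\}$. For each $T^{(k)}\in\mathcal{T}$ set $Z^{(k)}:=\bigcup\{Y_i:T_i=T^{(k)}\}$ and form the `join' separation $s^{(k)}:=(T^{(k)}\cup Z^{(k)},V(G)\sm Z^{(k)})$, a proper 3-separation. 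Intersecting the $B_i$-sides over those $i$ with $T_i=T^{(k)}$ and invoking \cref{TangleDirection} for the separator $T^{(k)}$ shows that $\tau$'s chosen component of $G-T^{(k)}$ lies on the $V(G)\sm Z^{(k)}$-side; hence $s^{(k)}\in\tau$.

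I would then exhibit three oriented separations in $\tau$ whose small sides' induced subgraphs cover $G$, violating the tangle axiom. Throughout I use that $\tau$ contains $(A,V(G))$ for every $A\sbe V(G)$ with $|A|\leq 3$, since otherwise $(V(G),A)\in\tau$ would cover $G$ on its own. Write $y^{(k)}$ for the vertex with $T^{(k)}=X-y^{(k)}$. When $m=3$, the three joins already cover $G$, by an edge-by-edge check using $x\in T^{(k)}$ for every $k$ and the absence of edges between different $Y_i$'s. When $m=2$, only the possible edge $y^{(1)}y^{(2)}$ in $G[X]$ is missed, and it is covered by the trivial separation $(\{y^{(1)},y^{(2)}\},V(G))\in\tau$. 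When $m=1$, the sole join has small side $V(G)\sm\{y^{(1)}\}$; since $y^{(1)}\notin T^{(1)}$ implies $y^{(1)}$ has no neighbours in $V(G)\sm X$, 3-connectivity forces $N(y^{(1)})=T^{(1)}$, and the three edges from $y^{(1)}$ to $T^{(1)}$ are covered by two trivial separations $(A,V(G)),(A',V(G))\in\tau$ with $|A|,|A'|\leq 3$. The main obstacle is this case analysis: verifying that each $s^{(k)}$ actually lies in $\tau$ via the \cref{TangleDirection} intersection, and confirming in each case that the three chosen small sides cover every edge of $G$; the $m=1$ case is the most delicate because it relies on the extra structural observation that $y^{(1)}$ is isolated from $V(G)\sm X$.
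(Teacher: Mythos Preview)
Your proof is correct, and for (ii)$\Rightarrow$(iii) and (iii)$\Rightarrow$(i) it matches the paper (which invokes \cref{4tangleAvoidsClaw} where you count $|\varphi(A_i)|\le 4$, but these are the same observation).

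For (i)$\Rightarrow$(ii), both arguments exploit that the missing 3-subset $X-x$ forces $x$ into every component-neighbourhood, but the executions run in opposite directions. You take only the separators $T^{(k)}$ actually occurring in~$\sigma$, show each join $s^{(k)}\in\tau$ via \cref{TangleDirection}, and then split into cases $m=1,2,3$ to assemble a cover of~$G$, padding with trivial separations when $m<3$. The paper avoids this case split: it takes \emph{all three} non-missing 3-subsets $X_1,X_2,X_3$ uniformly (whether or not they arise as separators of~$\sigma$), observes in one line that $\bigcup_i G[U_i\cup X_i]=G$, infers that $\tau$ must orient some $\{U_i\cup X_i,\,V(G)\setminus U_i\}$ towards the $U_i$-side, and only then brings in~$\sigma$ to locate a contradicted $(A,B)\in\sigma$ via \cref{TangleDirection}. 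Your route is essentially the same contradiction read backwards; the paper's direction buys a uniform three-piece cover and eliminates the case analysis entirely. One small gap in your write-up: the step ``hence $x$ lies in every $T_i$'' presumes $A_i\setminus B_i\neq\emptyset$; you should note that trivial elements $(T_i,V(G))$ may be discarded from~$\sigma$ without changing the bag or the hypothesis $\sigma\subseteq\tau$.
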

\begin{proof}
(i)$\to$(ii).
Let $\tau$ be a 4-tangle in~$G$ with $\sigma\se\tau$.
Suppose for a contradiction that $X$ is not cubic in~$G$.
Then one of the four elements of $[X]^3$ is not equal to the neighbourhood of any component of~$G\sm X$.
Let $X_1,X_2,X_3$ be the other three elements of~$[X]^3$.
For each $i\in [3]$, let $U_i$ denote the (possibly empty) union of all components of $G\sm X$ with neighourhood equal to~$X_i$, and let $s_i:=\{V(U_i)\cup X_i,V(G\sm U_i)\}$.
Since all $s_i$ are 3-separations of $G$ and $\bigcup_{i=1}^3 G[U_i\cup X_i]=G$, no 4-tangle in~$G$ orients all $s_i$ towards~$G\sm U_i$.
Hence $\tau$ orients~$s_1$, say, towards~$U_1\cup X_1$.
As $\tau$ is a 4-tangle, $U_1$ must include the component $C(X_1,\tau)=:C$ of $G\sm X_1$ provided by \cref{TangleDirection}.
Since $\sigma$ is a star of 3-separations with bag~$X$, some $(A,B)\in\sigma$ satisfies $C\se G[A\sm B]$.
But then $\tau$ orients $\{A,B\}$ towards~$A$ by \cref{TangleDirection}, contradicting the assumption that $\sigma\se\tau$.

(ii)$\to$(iii).
The cubic vertex set $X$ gives rise to a standard cube-minor $\varphi\colon G\rminor Q$.
Since the cube is internally 4-connected, it has a unique 4-tangle $\tau_Q$ by \cref{internalUnique}.
This 4-tangle lifts to a 4-tangle $\tau$ in~$G$ by \cref{LiftingTangles}.
Every $s\in \sigma$ induces an oriented 3-separation~$\varphi(s)$ of the cube minor~$Q$.
The separator of~$s$ is an element $Y\in [X]^3$, and the left side of~$s$ includes only components of $G\sm X$ with neighbourhood equal to~$Y$.
Hence the left side of~$\varphi(s)$ either induces a claw in~$Q$ or has size three, depending on whether one of the components included in its left side is used as a branch set by the cube minor.
In either case, $\varphi(s)\in\tau_Q$ by \cref{4tangleAvoidsClaw}.
Hence $s\in\tau$ and, more generally,~$\sigma\se\tau$.

(iii)$\to$(i) is trivial.
\end{proof}

\section{Greedy decompositions of 3-connected graphs}\label{sec:GreedyDecomp}

The aim of this section is to find a tree-decomposition of every 3-connected graph from which we can read the internally 4-connected minors that we need for a proof of \cref{main}.
This will be achieved with \cref{greedyTangleProperties}.

\subsection{Preparation}

\begin{lemma}\label{3sepClawChar}
Every proper 3-separation $\{A,B\}$ of a 3-connected graph~$G$ satisfies exactly one of the following:
\begin{enumerate}[label={\textnormal{(C\arabic*)}}]
    \item\label{C:claw} $G[A]$ or $G[B]$ is a claw with set of leafs equal to $A\cap B$ while the other includes a cycle;
    \item\label{C:twoCycles} both $G[A]$ and $G[B]$ include cycles.
\end{enumerate}
\end{lemma}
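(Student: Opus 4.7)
My plan is to prove the lemma by showing that (C1) and (C2) are mutually exclusive and that at least one of them must hold. Disjointness is immediate: a claw is a tree, so in~(C1) the claw side contains no cycle, which conflicts with~(C2). The substance of the proof lies in establishing, separately for each side $S\in\{A,B\}$, the following dichotomy: either $G[S]$ is a claw with leaf set $A\cap B$, or $G[S]$ contains a cycle.

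To prove this dichotomy I would set $T:=A\cap B$ and $n:=|S\sm T|\geq 1$. The crucial observation is that $3$-connectedness together with the fan-lemma gives, for every $v\in S\sm T$, three internally disjoint paths in~$G$ from~$v$ to the three vertices of~$T$ (sharing only~$v$); as $T$ separates $A\sm B$ from $B\sm A$ in~$G$, each such path must lie entirely in $G[S]$, because it could only leave~$S$ through a vertex of~$T$ different from its endpoint, which would force a second visit to a vertex of~$T$ and contradict internal disjointness. This has two immediate consequences: $G[S]$ is connected, and every $v\in S\sm T$ has degree $\geq 3$ in $G[S]$. If $n=1$, writing $S\sm T=\{v\}$, the fan forces $v$ to be adjacent to all of~$T$; then $G[S]$ is exactly a claw with leaves~$T$ when $T$ is independent in~$G$, and otherwise contains a triangle through~$v$.

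If $n\geq 2$, then $G[S]$ has at least five vertices, hence cannot be a claw, and I would show $G[S]$ contains a cycle by contradiction. If $G[S]$ were acyclic, it would be a tree (being connected), with $n+3$ vertices, $n+2$ edges, and degree sum $2n+4$. But the $n$ vertices of $S\sm T$ each contribute degree $\geq 3$, and each of the three vertices of~$T$ contributes degree $\geq 1$ (using connectedness), giving $2n+4\geq 3n+3$, so $n\leq 1$, a contradiction. The decisive point of the argument is this tight degree count, and the step where I must be most careful is the combination of $3$-connectedness (for the lower bound on degrees in $S\sm T$) with connectedness of $G[S]$ (for the lower bound on degrees in $T$); without the latter, the estimate would be too weak.

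Applying this dichotomy to both $A$ and $B$ leaves four possible combinations, three of which directly realise~(C1) or~(C2). The only remaining case is that both $G[A]$ and $G[B]$ are claws: then $|A\sm B|=|B\sm A|=1$, the set $T$ is independent in~$G$, and both private vertices are adjacent to all of~$T$, forcing $G\cong K_{2,3}$, which has connectivity~$2$ and contradicts $3$-connectedness. This rules out the last case and completes the proof.
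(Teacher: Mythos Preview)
Your proof is correct and follows essentially the same approach as the paper's: establish for each side the dichotomy ``claw with leaves $A\cap B$'' versus ``contains a cycle'' via connectedness plus a degree count in a tree, then rule out the both-claws case. The only cosmetic differences are that the paper obtains connectedness and the degree bound directly from the fact that every component of $G[A\sm B]$ has neighbourhood $A\cap B$ (rather than via the fan lemma), argues ``at most three leaves, hence at most one non-leaf of degree~$\ge 3$'' instead of your degree-sum inequality, and dismisses the both-claws case in one line by minimum degree~three rather than naming~$K_{2,3}$.
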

\begin{proof}
Clearly, \cref{C:claw} and \cref{C:twoCycles} exclude each other.
Suppose that \cref{C:twoCycles} fails; we shall show~\cref{C:claw}.
Since $G$ has minimum degree three, it suffices to show for both $X\in\{A,B\}$ that $G[X]$ is a claw if $G[X]$ includes no cycle.
Say $X=A$.
Every component of $G[A\sm B]$ has neighbourhood equal to $A\cap B$, so $G[A]$ is connected.
Thus, if $G[A]$ includes no cycle, it is a tree~$T$.
All leafs of $T$ lie in $A\cap B$.
Since $A\cap B$ has size three, $T$ has at most three leafs.
As every non-leaf of $T$ has degree at least three, $T$ has at most one non-leaf.
Since $A\sm B$ is nonempty, $T$ has at least one non-leaf.
So $T$ is a claw with set of leafs equal to~$A\cap B$.
\end{proof}

A~proper 3-separation $\{A,B\}$ of $G$ is \emph{claw-free} if it satisfies \cref{C:twoCycles} instead of \cref{C:claw}.\medskip

Let $\sigma=\{\,(A_i,B_i): i\in I\,\}$ be a star of separations of~$G$.
The \emph{torso} of~$\sigma$ is the graph obtained from the subgraph of~$G$ induced by the bag of~$\sigma$ by turning each separator $A_i\cap B_i$ (for $i\in I$) into a clique.
Now let $S$ be a set of separations of~$G$ and recall that $\vS=\{\,(A,B),(B,A):\{A,B\}\in S\,\}$.
We say that $\sigma$ is a \emph{splitting star} of~$S$ if $\sigma\se\vS$ and for every $\{C,D\}\in S$ there is $i\in I$ such that either $(C,D)\le (A_i,B_i)$ or $(D,C)\le (A_i,B_i)$.
A separation $\{U,W\}$ of~$G$ \emph{interlaces} a star~$\sigma$ if for every $i\in I$ either $(A_i,B_i)<(U,W)$ or $(A_i,B_i)<(W,U)$.

\begin{lemma}\label{LiftingLemma}
Let $N$ be a nested set of separations of a graph~$G$, and let $\sigma$ be a splitting star of~$N$ with torso~$X$.
For every proper separation $\{A,B\}$ of~$X$ there exists a separation $\{\hat A,\hat B\}$ of~$G$ such that $\hat A\cap V(X)=A$ and $\hat B\cap V(X)=B$ and $\hat A\cap \hat B=A\cap B$.
Moreover, $\{\hat A,\hat B\}$ interlaces~$\sigma$ and is nested with~$N$.
\end{lemma}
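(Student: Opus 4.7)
The plan is to build $\{\hat A,\hat B\}$ by attaching to $A$ and to $B$ the ``outer regions'' picked out by the star~$\sigma$. Since $\sigma$ is a star, the sets $A_i\sm B_i$ for $(A_i,B_i)\in\sigma$ are pairwise disjoint, and together with the bag $V(X)=\bigcap_i B_i$ they partition~$V(G)$. For each $i\in I$ write $K_i:=A_i\cap B_i$; by definition of the torso, each $K_i$ is a clique of~$X$.

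The key structural observation is that each $K_i$ lies entirely inside $A$ or entirely inside $B$: if $K_i$ met both $A\sm B$ and $B\sm A$, then two of its vertices would be joined by an edge of~$X$ crossing the separator $A\cap B$, impossible. This lets me partition $I$ as $I_A\sqcup I_B$ with $K_i\se A$ for $i\in I_A$ and $K_i\se B$ for $i\in I_B$, resolving the case $K_i\se A\cap B$ arbitrarily. I then set
\[
\hat A \;:=\; A\,\cup\,\bigcup_{i\in I_A}(A_i\sm B_i)\qquad\text{and}\qquad\hat B \;:=\; B\,\cup\,\bigcup_{i\in I_B}(A_i\sm B_i).
\]

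The rest is verification. First, $\{\hat A,\hat B\}$ is a separation of~$G$: an edge inside $G[V(X)]$ is handled by $\{A,B\}$ being a separation of~$X$; an edge between $V(X)$ and $A_i\sm B_i$ has its $V(X)$-endpoint forced into $K_i$, which lies on the same side as $A_i\sm B_i$ by construction; an edge between $A_i\sm B_i$ and $A_j\sm B_j$ with $i\neq j$ is ruled out by the star property $A_i\se B_j$. Second, the three identities $\hat A\cap V(X)=A$, $\hat B\cap V(X)=B$ and $\hat A\cap\hat B=A\cap B$ follow because the outer regions are disjoint from $V(X)$ and from each other.

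Finally, I would establish the interlacing/nestedness claims together. For $i\in I_A$, $A_i=(A_i\sm B_i)\cup K_i\se\hat A$, and $\hat B\se B_i$ since all outer regions other than $A_i\sm B_i$ lie in~$B_i$ by the star property; thus $(A_i,B_i)\le(\hat A,\hat B)$, and strictness follows from the properness of $\{A,B\}$ in~$X$, which guarantees a vertex of~$A$ outside $K_i$. Symmetrically $(A_i,B_i)\le(\hat B,\hat A)$ for $i\in I_B$, giving interlacing. For nestedness, any $\{C,D\}\in N$ is, by the splitting-star property, oriented below some $(A_i,B_i)\in\sigma$; composing with the interlacing inequality places $\{C,D\}$ below one side of $\{\hat A,\hat B\}$. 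The main structural hurdle is the clique-on-one-side observation in the second paragraph; everything downstream is bookkeeping driven by the star property.
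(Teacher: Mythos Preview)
Your argument is correct and is precisely the standard ``folklore'' construction the paper alludes to (the paper gives no proof of its own, merely citing \cite{Canonical3arXiv}*{Lemma~2.6.4}). The core idea---each torso-clique $K_i$ lies entirely in $A$ or entirely in $B$, so one may push each outer region $A_i\sm B_i$ to the side containing its separator---is exactly the intended one.

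One small imprecision: your justification for strictness of $(A_i,B_i)<(\hat A,\hat B)$ claims properness yields ``a vertex of $A$ outside $K_i$'', but this need not hold (one could have $A=K_i$). The clean fix uses the \emph{other} side of properness: any $v\in A\sm B$ lies in $V(X)\subseteq B_i$ but not in $\hat B$ (since $\hat B\cap V(X)=B$), so $\hat B\subsetneq B_i$. Symmetrically, $v\in B\sm A$ handles $i\in I_B$. Everything else is fine.
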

\begin{proof}
This is folklore.
The proof is analogous to the proof of \cite[Lemma~2.6.4]{Canonical3arXiv}.
\end{proof}

In the context of \cref{LiftingLemma}, we say that $\{A,B\}$ \emph{lifts} to $\{\hat A,\hat B\}$, and call $\{\hat A,\hat B\}$ a \emph{lift} of $\{A,B\}$.

\subsection{Decomposition}

The purpose of the following definitions is to get \cref{chopExtension} and \cref{greedyTangleProperties} to work.

A set $S$ of proper 3-separations is \emph{claw-freeable} if $S$ can be enumerated as $s_0,s_1,\ldots,s_n$ such that for every index~$i$ either $s_i$ is claw-free or there is $j<i$ such that the separators of $s_j$ and $s_i$ intersect in at least two vertices.
In this context, the linear ordering $s_0,\ldots,s_n$ of~$S$ is called \emph{claw-freeing}.
A~claw-freeable nested set of proper 3-separations is a \emph{3-chop}.
A~3-chop of~$G$ is \emph{maximal} if it is not properly included as a subset in another 3-chop of~$G$.

\begin{figure}[ht]   
\begin{center}
   	\includegraphics[height=7\baselineskip]{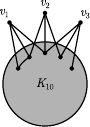}
   	\caption{This graph is obtained from a $K_{10}$ by first attaching the three vertices $v_1$, $v_2$ and $v_3$ of degree three as illustrated, and then deleting all edges with both ends in the neighbourhood of a~$v_i$, except one edge in the neighbourhood of~$v_1$ which lies as in the figure.}\label{fig:ord}
\end{center}
\end{figure}

\begin{example}
Let $G$ be the graph depicted in \cref{fig:ord}.
Let $N$ denote the set of all 3-separations of the form $s_i:=\{V(G)-v_i,\{v_i\}\cup N(v_i)\}$ where $i\in [3]$.
Then $N$ has a claw-freeing linear ordering~$s_1,s_2,s_3$.
This is the only claw-freeing ordering of~$N$.
\end{example}

Let $H$ and $G$ be two graphs.
We say that $H$ is a \emph{faithful minor} of~$G$ if there is $\varphi\colon G\rminor H$ such that $x\in\varphi^{-1}(x)$ for all $x\in V(H)$.
Note that $H$ being a faithful minor of~$G$ implies $V(H)\se V(G)$.

\begin{lemma}\label{faithfulSimpleTorso}
    Let $\{A,B\}$ be a proper 3-separation of a 3-connected graph~$G$.
    If $G[B]$ includes a cycle, then the torso of $\{(B,A)\}$ is a faithful minor of~$G$.
\end{lemma}
\begin{proof}
    The torso $H$ of $\{(B,A)\}$ is equal to the graph that is obtained from $G[A]$ by turning $A\cap B$ into a triangle.
    Let $O\se G[B]$ be a cycle.
    By Menger's theorem, we find a set $\cP$ of three disjoint paths in $G$ from $A\cap B$ to~$O$.
    We now find $H$ as a faithful minor of $G$ by deleting all the vertices and edges of $G[B]$ that are not in $O\cup \bigcup\cP$, contracting the three paths in $\cP$ to vertices, and contracting the three segments of $O$ between the endvertices of paths in~$\cP$ to edges.
\end{proof}

A nested set~$N$ of separations of a graph~$G$ is \emph{torso-faithful} to~$G$ if every torso of~$N$ is a faithful minor of~$G$.

\begin{lemma}\label{chopExtension}
All 3-chops of 3-connected graphs are torso-faithful.
\end{lemma}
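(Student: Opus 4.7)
\medskip\noindent\emph{Proof plan.}
The plan is to induct on $|N|$, processing the separations according to a fixed claw-freeing enumeration $s_0,\dots,s_n$ of $N$. The base case $|N|=0$ is immediate, since the unique torso is $G$ itself, which is a faithful minor of itself via the identity. For the inductive step, set $N':=N\setminus\{s_n\}$; the restriction of the enumeration to $N'$ is still claw-freeing, so $N'$ is a 3-chop and the induction hypothesis applies. Adding $s_n$ refines exactly one torso $T$ of $N'$, and by \cref{LiftingLemma} (combined with the faithful minor-map $G\rminor T$ supplied by the induction hypothesis) the separation $s_n$ corresponds to a proper 3-separation $\{A^T,B^T\}$ of $T$ that cuts $T$ into the two new torsos $T_A,T_B$ of $N$. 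Since faithful minors compose, it suffices to exhibit $T_A$ and $T_B$ as faithful minors of~$T$.

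The crucial sub-lemma I plan to establish is the following: for any graph $H$ and any proper 3-separation $\{X,Y\}$ of $H$ whose separator $S=X\cap Y$ is fully contained in the neighbourhood of every component of $H[X\setminus Y]$, if $H[X]$ contains a cycle then $H[X]$ admits a $K_3$-minor on $S$, i.e.\ three pairwise disjoint, pairwise adjacent, connected subgraphs whose vertex sets respectively contain the three elements of~$S$. This I plan to prove by picking one such component $D$ together with a cycle in $H[X]$, then doing a short case analysis on how many elements of $S$ the cycle meets and using the neighbourhood condition to link in the remaining separator vertices through~$D$. Granted this sub-lemma, realising the virtual triangle on $A^T\cap B^T$ inside $T[B^T]$ (respectively $T[A^T]$) exhibits $T_A$ (respectively $T_B$) as a faithful minor of~$T$.

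It therefore remains to show that both $T[A^T]$ and $T[B^T]$ contain cycles. I split into two cases according to the two clauses of the claw-freeing condition. If $s_n$ is claw-free in~$G$, then by \cref{3sepClawChar} both $G[A]$ and $G[B]$ contain cycles, and since the faithful minor-map $G\rminor T$ fixes bag vertices, such cycles project down to cycles in $T[A^T]$ and $T[B^T]$ (using that each cycle traverses at least two distinct root vertices on the relevant side). Otherwise there is $j<n$ with $|S_{s_j}\cap S_{s_n}|\geq 2$: here the virtual triangle already present on $S_{s_j}$ in the earlier torsos contributes an edge between two vertices of $S_{s_n}$ inside~$T$, and this edge together with the single ``claw-centre'' vertex of $s_n$'s claw side (if one exists) yields a triangle, hence a cycle, in the corresponding side of~$T$.

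The main obstacle is the second case: one has to verify that $s_j$ actually belongs to the splitting star defining~$T$ (so that its virtual triangle really is present inside $T$), that the two shared separator vertices of $s_j$ and $s_n$ remain separator vertices of the lifted separation $\{A^T,B^T\}$, and that the virtual edge between them sits on the claw side of the lift. I expect this to require a small supplementary lemma describing how two nested proper 3-separations with large separator intersection must sit relative to each other and to the torso structure; the first case and the sub-lemma themselves are more routine.
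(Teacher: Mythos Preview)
Your induction peels off the \emph{last} element $s_n$ of a claw-freeing enumeration; the paper peels off the \emph{first}. That single choice is decisive: the first element of any claw-freeing order is claw-free by definition, so the paper only ever cuts along a claw-free $\{A,B\}$. Both $G[A],G[B]$ then contain cycles by \cref{3sepClawChar}, whence $G_A,G_B$ are faithful minors of~$G$ via Menger; the rest of~$N$ restricts to claw-freeable families in $G_A$ and~$G_B$ (the torso triangle on $A\cap B$ converts any $s_i$ whose claw-freeing witness lay on the $B$-side into a genuinely claw-free separation of~$G_A$); induction on each half finishes, and the two faithful minor-maps glue along $A\cap B$. Your Case~2 and the supplementary lemma you anticipate for it simply never arise.

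Your route is salvageable, but the plan for Case~2 has a genuine gap: you cannot in general verify that $s_j\in\sigma$, because $s_j$ may sit strictly below some element of~$\sigma$. The correct argument is to orient $s_j=(E,F)$ with $E$ on the $B$-side (this is forced, since the claw side~$A$ has only four vertices and $s_j\neq s_n$), and then invoke the splitting-star property to obtain $(C,D)\in\sigma$ with $(E,F)\le(C,D)$; the two shared separator vertices lie in $E\cap F\subseteq C$ and in the bag $\subseteq D$, hence in $C\cap D$, so it is the torso triangle on $C\cap D$, not on~$S_{s_j}$, that supplies the needed edge. Your Case~1 projection claim is also not solid as written --- a cycle in $G[A]$ may sit inside one branch set, or traverse edges absent from~$T$ --- but there the fix is easy: apply \cref{3sepClawChar} directly to $\{A^T,B^T\}$ in the $3$-connected torso~$T$ and check that if $T[A^T]$ were a claw then no $(C,D)\in\sigma$ satisfies $(C,D)<(A,B)$, forcing $A^T=A$ and $G[A]=T[A^T]$.
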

\begin{proof}
Let $N$ be a 3-chop of a 3-connected graph~$G$.
We proceed by induction on~$|N|$.
If $N$ is empty, there is nothing to show.
So let $\{A,B\}$ be the least element in a claw-freeing linear odering of~$N$.
Then $\{A,B\}$ is claw-free, so $G[A]$ and $G[B]$ include cycles by \cref{3sepClawChar}.

Let $G_A$ denote the graph obtained from $G[A]$ by turning $A\cap B$ into a triangle.
Then $G_A$ is 3-connected by \cref{LiftingLemma}.
Let $N_A'$ consist of all $\{C,D\}\in N$ that satisfy either $(C,D)<(A,B)$ or $(D,C)<(A,B)$, and let $N_A$ consist of $\{C\cap A,D\cap A\}$ for all $\{C,D\}\in N_A'$.
Note that $N_A$ is a claw-freeable nested set of proper 3-separations (in~$G_A$) and hence a 3-chop.
Thus $N_A$ is torso-faithful to~$G_A$ by the induction hypothesis.
By \cref{faithfulSimpleTorso}, $G_A$ is a faithful minor of~$G$.
Hence every torso of $N_A$ in~$G_A$ is a faithful minor of~$G$.
A~symmetric definition and argumentation show that every torso of~$N_B$ is a faithful minor of~$G$.

Let $\sigma$ be an arbitrary splitting star of~$N$.
If $\sigma$ contains neither $(A,B)$ nor $(B,A)$, then $\sigma$ defines a splitting star of~$N_A$ or of~$N_B$ with the same torso as~$\sigma$, and so the torso of~$\sigma$ is a faithful minor of~$G$ as shown above.
Otherwise $\sigma$ contains $(B,A)$, say.
Then $\sigma_A:=\sigma\cap N_A'$ defines a splitting star of~$N_A$.
The torso of $\sigma_A$ in $G_A$ is equal to the torso of $\sigma$ in $G$, and so it is a faithful minor of $G$ as shown above.
\end{proof}

\begin{figure}[ht]
    \centering
    \includegraphics[height=7\baselineskip]{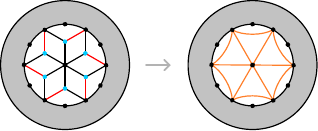}
    \caption{The grey donut is a large complete graph}
    \label{fig:greyDonut}
\end{figure}

\begin{example}\label{eg:tffNotFreeable}
The converse of \cref{chopExtension} fails in the following sense: There is a 3-connected graph~$G$ with a nested set~$N$ of proper 3-separations of~$G$ such that $N$ is torso-faithful but not claw-freeable.
\end{example}
\begin{proof}
Consider a graph $G$ as depicted in \cref{fig:greyDonut} on the left-hand side.
Let $N$ denote the set of all 3-separations whose separator is the neighbourhood of a blue vertex.
The torsos of the splitting stars of size one are faithful minors of~$G$.
The only nontrivial splitting star has the graph depicted on the right-hand side of \cref{fig:greyDonut} as torso.
This torso is also a faithful minor of~$G$, as it can be obtained from $G$ by contracting all red edges.
However, $N$ is not claw-freeable, since no element of $N$ is claw-free and the first element of a claw-freeing ordering must be claw-free.
\end{proof}

\cref{eg:tffNotFreeable} raises the question why we do not replace `claw-freeable' with `torso-faithful' in the definition of 3-chop.
In fact, we could do this and \cref{greedyTangleProperties} would hold with this alternative definition of 3-chop.
But it turns out that we need the definition via `claw-freeable' instead of `torso-faithful' to show \cref{universality}.

\begin{lemma}\label{largeTorsoInt4con}
Let $G$ be a 3-connected graph other than~$K_{3,3}$.
Let $\sigma$ be a splitting star of a maximal 3-chop~$N$ of~$G$ with torso~$X$.
If $|X|\ge 5$, then $X$ is internally 4-connected.
\end{lemma}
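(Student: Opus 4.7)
The plan is to argue by contradiction: assume $X$ is not internally 4-connected, and exhibit a 3-chop strictly larger than $N$. First, \cref{LiftingLemma} shows that $X$ is 3-connected, since any proper separation of $X$ of order at most~$2$ would lift to such a separation of the 3-connected~$G$. Moreover $X\neq K_{3,3}$: the torso carries a triangle on every separator in $\sigma$ and $K_{3,3}$ is triangle-free, so $\sigma$ would be empty and hence $X=G=K_{3,3}$, against the hypothesis.

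Second, I would extract a claw-free proper 3-separation $\{A,B\}$ of $X$ from the failure of internal 4-connectivity. Negating the definition yields a 3-separator of $X$ that is either non-independent or fails to isolate a unique vertex. A short case analysis, using $|X|\geq 5$ and $X\neq K_{3,3}$, produces a proper 3-separation $\{A,B\}$ of $X$ such that neither $X[A]$ nor $X[B]$ is a claw with leaves exactly $A\cap B$; \cref{3sepClawChar} applied to $X$ then renders $\{A,B\}$ claw-free. By \cref{LiftingLemma}, $\{A,B\}$ lifts to a proper 3-separation $\{\hat A,\hat B\}$ of $G$ that is nested with $N$ and interlaces $\sigma$, so every $(A_i,B_i)\in\sigma$ satisfies $A_i\subseteq\hat A$ or $A_i\subseteq\hat B$. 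Note $\{\hat A,\hat B\}\notin N$, since the lift restricts to the nontrivial separation $\{A,B\}$ on $X$, whereas every element of $N$ restricts trivially.

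Third, I would show that $N\cup\{\{\hat A,\hat B\}\}$ is again a 3-chop, by appending $\{\hat A,\hat B\}$ to a claw-freeing enumeration of $N$. If both $G[\hat A]$ and $G[\hat B]$ include cycles, then the appended element is claw-free in $G$ and the enumeration remains claw-freeing. Otherwise \cref{3sepClawChar} applied to $G$ forces one side, say $G[\hat A]$, to be a claw with leaves $A\cap B$. Then $\hat A=\{v\}\cup(A\cap B)$ for a unique $v\in A\sm B$, which forces $\hat A\sm V(X)=\emptyset$. Consequently every $(A_i,B_i)\in\sigma$ has $A_i\subseteq\hat B$ and thus $A_i\cap B_i\subseteq B$. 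Since $G[A]=G[\hat A]$ is acyclic while $X[A]$ contains a cycle, that cycle must traverse a clique edge of the torso; its two endpoints lie in $A\cap B\cap A_i\cap B_i$ for some such $i$, yielding $|A_i\cap B_i\cap A\cap B|\geq 2$. Thus $\{A_i,B_i\}\in N$ appears earlier in the enumeration and witnesses the claw-freeing condition for the appended $\{\hat A,\hat B\}$. This contradicts the maximality of~$N$.

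The main obstacle is the second sub-case, where $\{\hat A,\hat B\}$ itself is not claw-free. Its resolution turns on the observation that the emptiness of $\hat A\sm V(X)$ confines every $\sigma$-separator to $B$, so the only torso edges available to close a cycle in $X[A]$ beyond the claw $G[\hat A]$ are clique edges with both endpoints in $A\cap B$; the shared-separator witness is then read off directly.
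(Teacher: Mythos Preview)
Your proposal is correct and follows essentially the same approach as the paper's proof: show $X$ is 3\nobreakdash-connected and $X\neq K_{3,3}$, assume a claw-free proper 3-separation $\{A,B\}$ of $X$ exists, lift it to $\{\hat A,\hat B\}$ via \cref{LiftingLemma}, and contradict maximality of~$N$ by checking that $N\cup\{\{\hat A,\hat B\}\}$ is still a 3-chop in both the claw-free and the claw sub-case. The only differences are cosmetic: you spell out the (implicit in the paper) passage from ``not internally 4-connected'' to ``has a claw-free proper 3-separation'', and in the claw sub-case you reach the shared-separator witness via the inclusion $A_i\cap B_i\subseteq B$, whereas the paper observes directly that the extra torso edge must lie in $X[A\cap B]$ because the claw's centre is already adjacent in~$G$ to all of~$A\cap B$.
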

\begin{proof}
The torso $X$ is 3-connected, by \cref{LiftingLemma} and since $|X|\ge 5$.
We have $X\neq K_4$ since $|X|\ge 5$.
If $X$ is a $K_{3,3}$, then $\sigma$ must be empty since $X$ contains no triangle, and so $G=X=K_{3,3}$ would contradict our assumptions.

Suppose for a contradiction that $X$ has a claw-free proper 3-separation~$\{A,B\}$.
Let $\{\hat A,\hat B\}$ be a lift of $\{A,B\}$ from~$X$ to~$G$, which interlaces~$\sigma$ and is nested with~$N$, by \cref{LiftingLemma}.
Let $N'$ be obtained from $N$ by adding the lift~$\{\hat A,\hat B\}$.
If the lift $\{\hat A,\hat B\}$ is claw-free, then $N'$ is a 3-chop of~$G$ by \cref{chopExtension}, which contradicts the maximality of~$N$.

Otherwise $G[\hat A]$ is a claw, say.
Since $\{\hat A,\hat B\}$ is a lift of $\{A,B\}$ and $\{A,B\}$ is proper, we have $\hat A=A$.
Since $X[A]$ includes a cycle but $G[\hat A]\se X[A]$ is a claw, some edge in $X[A\cap B]$ must be missing in~$G$.
So there is an element of~$\sigma$ whose separator intersects $A\cap B$ in at least two vertices.
As $A\cap B=\hat A\cap\hat B$, the set $N'$ is a 3-chop of~$G$, which contradicts the maximality of~$N$.
\end{proof}

\begin{lemma}\label{liftIncludesStar}
Let $G$ be a 3-connected graph.
Let $\sigma$ be a star of 3-separations of~$G$ such that the torso~$X$ of $\sigma$ is a faithful minor of~$G$ witnessed by $\varphi\colon G\rminor X$.
Then the $\varphi$-lift to~$G$ of every 4-tangle in~$X$ includes~$\sigma$.
\end{lemma}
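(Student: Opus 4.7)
The plan is to fix an arbitrary $s=(A,B)\in\sigma$ and to verify that its induced oriented separation $\varphi(s)$ lies in the given $4$-tangle $\tau$; ranging over all $s\in\sigma$ then immediately gives $\sigma\subseteq\tau'$, where $\tau'$ denotes the $\varphi$-lift of $\tau$ to $G$.

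The key step will be to observe that $V(X)\subseteq B$. Since $\sigma$ is a star and $(A,B)\in\sigma$, the bag $V(X)=\bigcap_{(A_i,B_i)\in\sigma}B_i$ is trivially contained in~$B$. Faithfulness of $\varphi$ then forces the branch set $\varphi^{-1}(x)$ of every $x\in V(X)$ to meet $B$, because $x\in\varphi^{-1}(x)\cap B$. Writing $\varphi(s)=(A_\varphi,B_\varphi)$, this gives $V(X)\subseteq B_\varphi$, and hence $B_\varphi=V(X)$. So $\varphi(s)=(A_\varphi,V(X))$ is an oriented separation of $X$ whose separator $A_\varphi$ has at most three vertices (using that the order of $\varphi(s)$ does not exceed the order of $s$, as noted in the paragraph preceding \cref{LiftingTangles}).

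It then remains to show that any $4$-tangle in $X$ orients the unoriented separation $\{A_\varphi,V(X)\}$ towards~$V(X)$. The plan is to rule out the opposite orientation via the tangle axiom: if $(V(X),A_\varphi)$ were in $\tau$, then three copies of this oriented separation would satisfy $X[V(X)]\cup X[V(X)]\cup X[V(X)]=X$, contradicting the tangle axiom invoked in the proof of \cref{LiftingTangles}. Hence $\varphi(s)=(A_\varphi,V(X))\in\tau$, giving $s\in\tau'$ as desired.

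The only step that needs attention is the identification $B_\varphi=V(X)$; this is where the two hypotheses, the star property of $\sigma$ and the faithfulness of $\varphi$, combine. Once this is recognised, the rest of the argument reduces to an immediate application of the tangle axiom to a ``trivial'' separation whose $B$-side exhausts $V(X)$, and no genuine obstacle is anticipated.
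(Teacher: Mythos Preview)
Your proof is correct and follows essentially the same route as the paper. Both arguments hinge on the observation that faithfulness together with $V(X)\subseteq B$ forces $B_\varphi=V(X)$, so that $\varphi(s)$ is a separation with one side equal to the whole vertex set; the paper then invokes \cref{4tangleAvoidsClaw} (its case $|A|=3$), while you unfold that case directly via the tangle axiom, which amounts to the same thing.
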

\begin{proof}
Otherwise some 4-tangle in~$X$ would contain $\varphi(B,A)=(B\cap V(X),A\cap B)$ for some $(A,B)\in\sigma$, contradicting \cref{4tangleAvoidsClaw} as $A\cap B$ consists of three vertices.
\end{proof}

Recall that a separation \emph{distinguishes} two tangles if they orient it differently, and that it does so \emph{efficiently} if no separation of strictly lower order distinguishes the two tangles.

\begin{lemma}\label{CornerFish}
    Let $\tau_1,\tau_2$ be two 4-tangles in a 3-connected graph $G$ such that a 3-separation $\{A_1,A_2\}$ of $G$ efficiently distinguishes the $\tau_i$, say so that $\tau_i$ orients $\{A_{3-i},A_i\}$ towards~$A_i$ for both~$i$.
    Suppose that $\{C,D\}$ is a proper 3-separation of $G$ that crosses $\{A_1,A_2\}$, but such that both $\tau_i$ orient $\{C,D\}$ towards~$D$.
    Then there is $i$ such that the corner $\{C\cup A_{3-i},D\cap A_i\}$ efficiently distinguishes the~$\tau_i$.
\end{lemma}
\begin{proof}
    Let $c_i:=\{C\cup A_{3-i},D\cap A_i\}$ and $d_i:=\{C\cap A_{3-i},D\cup A_i\}$ for both~$i$, see \cref{fig:cornerfish}.
    
    We claim that there is an index $i$ such that $c_i$ has order $|c_i|\le 3$, and assume for a contradiction that both $c_1$ and $c_2$ have order greater than three.
    Recall that $|c_i|+|d_i|\le |A_1\cap A_2|+|C\cap D|=6$ for both~$i$ by submodularity~\cite[§12.5]{DiestelBookCurrent}.
    Then both $d_i$ have order at most two.
    As $G$ is 3-connected, this means that there are no vertices in the sets $(C\sm D)\cap (A_i\sm A_{3-i})$ for both~$i$.
    Since $\{C,D\}$ is proper, there is a vertex $v$ in $C\sm D$, and $v$ must lie in $A_1\cap A_2$.
    Now the three vertices in $C\cap D$ lie in the union of the separators of the~$d_i$.
    So the separator of $d_1$, say, contains at least two vertices from $C\cap D$.
    But this separator also contains~$v$, so it has size at least three, contradicting that we deduced above that it has size at most two.

    So $c_1$, say, has order at most three.
    By the tangle-property, $\tau_1$ must orient~$c_1$ towards the side $D\cap A_1$, while $\tau_2$ must orient $c_2$ towards the side $C\cup A_2$.
\end{proof}

\begin{figure}[ht]
    \centering
    \includegraphics[height=6\baselineskip]{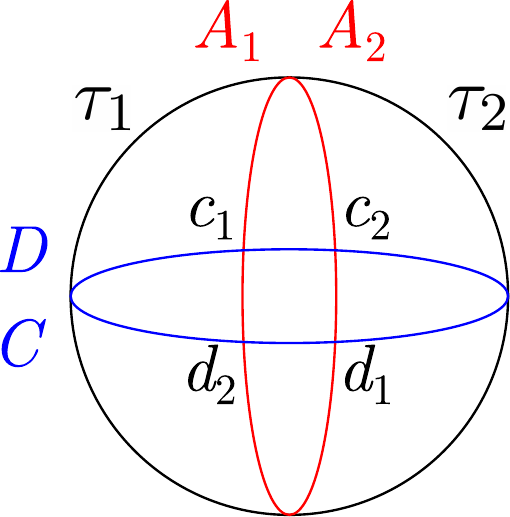}
    \caption{The situation in the proof of \cref{CornerFish}}
    \label{fig:cornerfish}
\end{figure}

\begin{lemma}\label{addingDistinguishers}
Let $N$ be a nested set of proper 3-separations of a 3-connected graph~$G$.
Let $\tau_1,\tau_2$ be two 4-tangles in~$G$ that include the same splitting star~$\sigma$ of~$N$.
Then there is a claw-free proper 3-separation of~$G$ that efficiently distinguishes the $\tau_i$ and interlaces~$\sigma$.
\end{lemma}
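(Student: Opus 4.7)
The plan is to pick any efficient distinguisher, observe that it must be claw-free, arrange that it is nested with $\sigma$ by submodular uncrossing, and then upgrade nestedness to interlacing by a direct application of~(T2).

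Since $\tau_1 \ne \tau_2$ are 4-tangles, some oriented separation of order less than $4$ lies in exactly one of them; an oriented separation of minimum order with this property is an efficient distinguisher $s = \{U, W\}$, say with $\tau_1 \ni (U, W)$ and $\tau_2 \ni (W, U)$. Because $G$ is 3-connected, no proper separation has order less than three, so $s$ is proper and of order exactly three. Moreover, $s$ is claw-free: by \cref{4tangleAvoidsClaw}, if either $G[U]$ or $G[W]$ were a claw (or had only three vertices), both tangles would orient $s$ away from that side, contradicting that $s$ distinguishes them.

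I would then arrange that $s$ is nested with every element of $\sigma$ by standard submodular uncrossing. Whenever $s$ crosses some $(A, B) \in \sigma$, submodularity forces a corner separation of $s$ and $\{A, B\}$ to have order at most three, and the profile property of tangles (closure of each $\tau_i$ under joins and meets within the order bound) together with a short case analysis on the orientations of the four corners produces an efficient distinguisher that is nested with $(A, B)$. A suitable weight function (e.g.\ minimising the pair $(|s|,\, \#\text{$\sigma$-crossings})$ lexicographically) iterates this to nestedness with every element of $\sigma$.

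It remains to upgrade nestedness to interlacing. Suppose $s$ is nested with $(A, B) \in \sigma$; then one of
\[
(U, W) \le (A, B),\quad (U, W) \le (B, A),\quad (W, U) \le (A, B),\quad (W, U) \le (B, A)
\]
holds. The second and fourth reduce to $(A, B) \le (W, U)$ and $(A, B) \le (U, W)$ respectively, with strict inequality because $s \ne \{A, B\}$ (otherwise $s \in \sigma$ would be oriented the same way by both $\tau_i$ and could not distinguish them), and these are exactly the interlacing conditions for $\{A, B\}$. To rule out the first case, note that $(U, W) \le (A, B)$ gives $B \subseteq W$, so $A \cup W \supseteq A \cup B = V(G)$; and since $\{A, B\}$ is a separation there are no edges between $A \smallsetminus B$ and $B \smallsetminus A$, giving $G[A] \cup G[W] = G$ — this violates (T2) applied to the triple $(A, B), (A, B), (W, U) \in \tau_2$. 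The symmetric (T2) argument for $(A, B), (A, B), (U, W) \in \tau_1$ rules out the third case. Therefore $s$ interlaces every element of $\sigma$, completing the proof.

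The main obstacle is the uncrossing step, where one must preserve nestedness with previously-handled elements of $\sigma$ while uncrossing with new ones; by contrast, the concluding (T2)-argument that promotes nestedness to interlacing is short and clean.
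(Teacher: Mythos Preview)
Your approach coincides with the paper's: choose an efficient distinguisher minimising the number of crossings with~$\sigma$, note via \cref{4tangleAvoidsClaw} that it is proper and claw-free, and show that any remaining crossing with some $(C,D)\in\sigma$ can be removed by passing to a corner that still distinguishes. The paper resolves precisely the obstacle you flag by invoking the Fish lemma \cite[Lemma~12.5.5]{DiestelBookCurrent}: every separation that crosses a corner of $s$ and $\{C,D\}$ already crosses $s$ or~$\{C,D\}$; since all elements of~$\sigma$ are nested with~$\{C,D\}$, replacing $s$ by such a corner strictly decreases your crossing count without introducing new crossings, so your lexicographic minimisation indeed terminates with a $\sigma$-nested distinguisher --- no bookkeeping of ``previously-handled'' elements is needed. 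Your concluding (T2)-argument for interlacing is correct and simply spells out what the paper compresses into a single sentence.
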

\begin{proof}
Let $\{A_1,A_2\}$ be a 3-separation of~$G$ that efficiently distinguishes the~$\tau_i$, chosen so that it crosses as few elements of~$\sigma$ as possible.
Then $\{A_1,A_2\}$ is proper and claw-free by \cref{4tangleAvoidsClaw}.

We claim that $\{A_1,A_2\}$ crosses no elements of~$\sigma$, and suppose for a contradiction that it crosses some $(C,D)\in\sigma$.
Both $\tau_i$ contain~$(C,D)$, but orient $\{A_1,A_2\}$ differently.
By \cref{CornerFish}, some corner $c=\{A_1\cap D,A_2\cup C\}$ (say) efficiently distinguishes the~$\tau_i$.
Every element of~$\sigma$ that crosses~$c$ must cross $\{A_1,A_2\}$ or~$\{C,D\}$ by \cite[Lemma~12.5.5]{DiestelBookCurrent}.
Since every element of~$\sigma$ is nested with~$\{C,D\}$, and since $c$ is nested with~$\{C,D\}$, the corner~$c$ crosses less elements of~$\sigma$ than~$\{A_1,A_2\}$, a contradiction.

Since $\{A_1,A_2\}$ is nested with all the elements of~$\sigma$, and since both $\tau_i$ include~$\sigma$, the separation $\{A_1,A_2\}$ must interlace~$\sigma$.
\end{proof}

\begin{corollary}
Every nested set of proper 3-separations of a 3-connected graph~$G$ can be extended to a nested set of proper 3-separations of~$G$ that efficiently distinguishes all 4-tangles in~$G$.\qed
\end{corollary}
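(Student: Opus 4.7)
The plan is to build the required extension by a simple iterative procedure. I set $N_0 := N$, and at each stage $i$ I check whether $N_i$ already efficiently distinguishes every pair of 4-tangles in~$G$. If so, the construction is complete. Otherwise, I pick a pair $\tau_1 \neq \tau_2$ of 4-tangles not efficiently distinguished by $N_i$ and add a single new 3-separation supplied by \cref{addingDistinguishers}.

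To set up that application I need a common splitting star of $N_i$ contained in both tangles. Since $G$ is 3-connected, no separation of order less than three exists, so efficient distinguishing of any two 4-tangles amounts to distinguishing them by a 3-separation. Hence $\tau_1$ and $\tau_2$ must induce the same orientation of~$N_i$; the maximal elements of this common orientation form a splitting star $\sigma$ of $N_i$ included in $\tau_1 \cap \tau_2$. Now \cref{addingDistinguishers} supplies a claw-free proper 3-separation $s$ that efficiently distinguishes $\tau_1$ from $\tau_2$ and interlaces~$\sigma$, and I set $N_{i+1} := N_i \cup \{s\}$.

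The one technical point to verify is that $N_{i+1}$ remains nested. Any $\{C,D\} \in N_i$ lies on one side of some $(A,B) \in \sigma$ by the splitting-star property, and $s$ lies strictly on one side of each element of $\sigma$ by interlacing; chaining these inequalities forces $s$ to be nested with~$\{C,D\}$. I expect this little chain argument to be the main obstacle in that it requires the correct set-up to go through, but once the splitting star $\sigma$ is correctly identified it is straightforward. Termination is immediate: $G$ has only finitely many 4-tangles, and each new 3-separation efficiently distinguishes a pair that was previously undistinguished, so the procedure halts after at most $\binom{t}{2}$ steps, where $t$ is the number of 4-tangles of~$G$. At termination, the resulting nested set is the desired extension of~$N$.
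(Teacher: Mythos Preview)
Your argument is correct and matches the paper's intended reasoning: the paper marks this corollary with a bare \qed, treating it as an immediate consequence of \cref{addingDistinguishers}, and your write-up simply spells out the iteration that the paper leaves implicit (including the observation that interlacing the splitting star forces nestedness with all of~$N_i$, which the paper also uses without comment in the proof of \cref{maxChopDists}).
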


\begin{lemma}\label{maxChopDists}
Every maximal 3-chop of a 3-connected graph~$G$ efficiently distinguishes all the 4-tangles in~$G$.
\end{lemma}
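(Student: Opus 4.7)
The plan is to argue by contradiction against the maximality of $N$. Fix two distinct 4-tangles $\tau_1,\tau_2$ in~$G$. Since $G$ is 3-connected, every proper separation of~$G$ has order at least~$3$, so any 3-separation that distinguishes $\tau_1$ from $\tau_2$ distinguishes them efficiently; it thus suffices to produce a distinguishing element of~$N$.

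If some element of $N$ is oriented differently by $\tau_1$ and $\tau_2$, it efficiently distinguishes them and we are done. Otherwise the two tangles induce a common consistent orientation $\vec N_\tau$ of~$N$, whose set $\sigma$ of maximal elements is a splitting star of $N$ contained in both $\tau_1$ and~$\tau_2$. Applying \cref{addingDistinguishers} to the triple $\tau_1,\tau_2,\sigma$ yields a claw-free proper 3-separation $s$ of~$G$ that efficiently distinguishes the two tangles and interlaces~$\sigma$. Because every element of~$N$ has one of its orientations below some element of~$\sigma$ and $s$ interlaces~$\sigma$, the separation $s$ is nested with every element of~$N$; moreover $s\notin N$ since the two tangles agree on~$N$ yet disagree on~$s$.

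To conclude, I append $s$ at the end of a claw-freeing enumeration of~$N$: as $s$ itself is claw-free, the extended enumeration is still claw-freeing, so $N\cup\{s\}$ is a 3-chop of~$G$ strictly extending~$N$, contradicting the maximality of~$N$. The heavy lifting is concentrated in \cref{addingDistinguishers}, which is already proved; the only slightly delicate step here is the observation that when $\tau_1$ and $\tau_2$ agree on~$N$, the maximal elements of their common orientation genuinely form a splitting star, a finite-poset fact that licenses the application of \cref{addingDistinguishers}. The rest is bookkeeping about nestedness and extending a claw-freeing ordering by one claw-free element at the tail.
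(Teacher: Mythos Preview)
Your argument is correct and follows the same route as the paper's proof: assume two 4-tangles are not distinguished by~$N$, find the common splitting star~$\sigma$ they both include, invoke \cref{addingDistinguishers} to obtain a claw-free proper 3-separation~$s$ interlacing~$\sigma$, and append~$s$ to~$N$ to violate maximality. You have simply spelled out a few steps that the paper leaves implicit---namely why the common orientation of~$N$ yields a splitting star, why interlacing~$\sigma$ forces nestedness with all of~$N$, and why appending a claw-free~$s$ preserves claw-freeability---all of which are correct.
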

\begin{proof}
Let $N$ be a maximal 3-chop of~$G$.
Suppose for a contradiction that two distinct 4-tangles $\tau_1,\tau_2$ in~$G$ include the same splitting star~$\sigma$ of~$N$.
By \cref{addingDistinguishers}, there is a claw-free proper 3-separation $s$ of~$G$ that efficiently distinguishes the~$\tau_i$ and is nested with~$N$.
Then $N\cup\{s\}$ is a larger 3-chop of~$G$, a contradiction.
\end{proof}

\begin{theorem}\label{greedyTangleProperties}
Let $G$ be a 3-connected graph, and let $N$ be a maximal 3-chop of~$G$.
\begin{enumerate}
    \item\label{N:dist} $N$ efficiently distinguishes all the 4-tangles in~$G$.
    \item\label{N:tff} All torsos of~$N$ are faithful minors of~$G$.
\end{enumerate}
Let $\sigma$ be a splitting star of~$N$ with torso~$X$.
\begin{enumerate}[resume]
    \item\label{N:Four} If $|X|\le 4$, then:
    \begin{itemize}
        \item some 4-tangle in~$G$ includes $\sigma$ if and only if $V(X)$ is cubic in~$G$;
        \item if a 4-tangle $\tau$ in~$G$ includes $\sigma$ and $\varphi\colon G\rminor Q$ is a standard cube-minor at~$V(X)$, then $\varphi(\tau)$ is the unique 4-tangle in~$Q$.
    \end{itemize}
    \item\label{N:Five} If $|X|\ge 5$ and $G\neq K_{3,3}$, then:
    \begin{itemize}
        \item the torso $X$ is internally 4-connected;
        \item a 4-tangle $\tau$ in~$G$ includes~$\sigma$ and $\varphi(\tau)$ is the unique 4-tangle in~$X$ for all $\varphi\colon G\rminor X$ witnessing that $X$ is a faithful minor of~$G$.
    \end{itemize}
\end{enumerate}
\end{theorem}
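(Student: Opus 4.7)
The plan is to assemble the four parts from the preparatory lemmas of this section. I expect parts~(i) and~(ii) to drop out immediately: (i) restates \cref{maxChopDists}, and (ii) restates \cref{chopExtension}.

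For~(iv), the first bullet is exactly \cref{largeTorsoInt4con}. For the second, I would pick any witness $\varphi\colon G\rminor X$ that $X$ is a faithful minor of~$G$. Since $X$ is internally 4-connected, \cref{internalUnique} gives a unique 4-tangle $\tau_X$ in~$X$; its $\varphi$-lift~$\tau'$ is a 4-tangle in~$G$ by \cref{LiftingTangles} and satisfies $\sigma\subseteq\tau'$ by \cref{liftIncludesStar}. It then remains to show that any 4-tangle $\tau\supseteq\sigma$ must equal~$\tau'$, which will give $\varphi(\tau)=\tau_X$.

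For~(iii), the first bullet is the equivalence (i)$\leftrightarrow$(ii) of \cref{cubicTanglesInFourSets} in the case $|V(X)|=4$. The case $|V(X)|\le 3$ is vacuous: $V(X)$ is not cubic (too few vertices), and no 4-tangle can include such a small splitting star because the separators in~$\sigma$ would all coincide with the $3$-vertex bag and their $A$-sides would cover~$G$, contradicting the tangle axiom. For the second bullet, given $\tau\supseteq\sigma$ and a standard cube-minor $\varphi\colon G\rminor Q$ at~$V(X)$, \cref{cubicTanglesInFourSets}(iii) yields a 4-tangle of~$G$ containing~$\sigma$ arising as the $\varphi$-lift of the unique 4-tangle $\tau_Q$ in~$Q$; the same uniqueness argument as in~(iv) then forces $\tau$ to agree with this lift, giving $\varphi(\tau)=\tau_Q$.

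The main obstacle is therefore the uniqueness-from-splitting-star claim invoked in both~(iii) and~(iv): any two 4-tangles that both contain the splitting star~$\sigma$ of~$N$ must coincide. I would derive this from~(i) as follows — for any $s\in N$, the definition of splitting star provides an orientation of~$s$ below some $(A,B)\in\sigma$, so if both tangles contain $(A,B)$ then tangle consistency forces them to orient~$s$ the same way; hence no element of~$N$ distinguishes them, contradicting~(i). Modulo this bookkeeping, the whole proof is a short assembly of the lemmas already proved.
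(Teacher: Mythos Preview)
Your proposal is correct and follows essentially the same route as the paper: (i) and (ii) are \cref{maxChopDists} and \cref{chopExtension}; (iv) combines \cref{largeTorsoInt4con}, \cref{internalUnique}, \cref{LiftingTangles}, \cref{liftIncludesStar} and the uniqueness via~(i); and (iii) uses \cref{cubicTanglesInFourSets} together with the same uniqueness argument. The only cosmetic difference is your handling of the case $|X|\le 3$: the paper invokes \cref{TangleDirection} directly, whereas you argue that the $A$-sides cover~$G$ --- this is fine, but note that the tangle axiom concerns three separations, so if $|\sigma|>3$ you should first merge elements of~$\sigma$ (they all share the separator~$V(X)$) before applying it.
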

\begin{proof}
\cref{N:dist} is \cref{maxChopDists}.
\cref{N:tff} is \cref{chopExtension}.

\cref{N:Four}. 
If $|X|\le 3$, then every 4-tangle in~$G$ lives in some component of $G\sm X$ in the sense of \cref{TangleDirection}, and by the same lemma no 4-tangle in~$G$ includes~$\sigma$.
Hence we may assume that $|X|=4$.
\cref{cubicTanglesInFourSets} shows the first claim.
Suppose now that a 4-tangle $\tau$ in~$G$ includes~$\sigma$ and $\varphi\colon G\rminor Q$ is a standard cube-minor at~$V(X)$.
Since the cube~$Q$ is internally 4-connected, it has a unique 4-tangle~$\theta_Q$, by \cref{internalUnique}.
Let $\theta$ denote the $\varphi$-lift of~$\theta_Q$ to~$G$.
Then $\theta$ includes~$\sigma$, by \cref{cubicTanglesInFourSets}.
By~\cref{N:dist}, this implies $\theta=\tau$, and hence $\varphi(\tau)=\theta_Q$.

\cref{N:Five}.
Assume $|X|\ge 5$.
By \cref{largeTorsoInt4con} and since $G$ is not a~$K_{3,3}$, the torso~$X$ is internally 4-connected.
Hence $X$ has a unique 4-tangle~$\tau_X$ by \cref{internalUnique}.
Let $\varphi\colon G\rminor X$ witness that $X$ is a faithful minor of~$G$.
Then the $\varphi$-lift $\tau$ of~$\tau_X$ to~$G$  includes~$\sigma$ by \cref{liftIncludesStar}.
In particular, $\varphi(\tau)=\tau_X$.
By~\cref{N:dist}, no other 4-tangle besides $\tau$ includes~$\sigma$.
Hence $\varphi'(\tau)=\tau_X$ for all faithful minor-maps $\varphi'\colon G\rminor X$.
\end{proof}

% \begin{corollary}\label{cor:tw3}
% A graph has tree-width $\le 3$ if and only if it has no internally 4-connected minor.\qed
% \end{corollary}

% \begin{corollary}
% A planar 3-connected graph has path-width $\le 3$ if and only if it has no internally 4-connected minor.
% \end{corollary}
% \begin{proof}
% This follows from \cref{cor:tw3} as a 3-connected graph has path-width $\le 3$ if and only if it has tree-width $\le 3$ and no $K_{3,3}$-minor, and because the cube has tree-width~$>3$.
% \end{proof}

A graph is \emph{quasi 4-connected} if it is 3-connected, has more than four vertices, and every 3-separation of~$G$ has a side of size~$\le 4$.

\begin{corollary}\label{quasi4star}
Every quasi 4-connected graph~$G$ has a star-decomposition of adhesion three such that the central torso is internally 4-connected or~$K_4$ or~$K_3$, and all leaf-bags have size four.
\end{corollary}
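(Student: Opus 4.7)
The plan is to apply the greedy decomposition of \cref{greedyTangleProperties} and verify that for a quasi 4-connected graph the resulting tree-decomposition is already a star. I would treat $G = K_{3,3}$ separately by exhibiting the explicit star-decomposition whose central bag is one bipartition class (yielding a $K_3$ central torso after saturating the separators) and whose three leaf bags are that class together with each vertex of the other class.

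Assume henceforth that $G$ is quasi 4-connected with $G \neq K_{3,3}$. A preliminary observation is that every proper 3-separation of $G$ has a side of size exactly four: both sides contain at least four vertices since $|V(G)| > 4$ and the separator has size three, while quasi 4-connectivity caps one side at four. Let $N$ be a maximal 3-chop of $G$, and for each $s_i = \{A_i, B_i\} \in N$ orient it so that $|A_i| = 4$. I claim the splitting stars of $N$ are exactly the ``central'' star $\sigma_C := \{(A_i, B_i) : i\}$ together with the singletons $\sigma_i := \{(B_i, A_i)\}$, so that the associated tree-decomposition is the star with centre $\sigma_C$ and leaves $\sigma_i$. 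The star property of $\sigma_C$ reduces to $A_i \subseteq B_j$ for $i \neq j$, which I would derive by a case analysis of the four possible $\le$-comparisons between two nested separations with small sides of size four: the comparisons placing a small side inside another small side force $A_i = A_j$, and I would rule out this configuration directly by verifying that two distinct 3-separations with a common small-side vertex set cannot be nested; the comparison placing a large side inside a small side is impossible by $|B_i| > 4 = |A_j|$. The same inclusions show each $\sigma_i$ is splitting, and no further splitting star exists because a star containing $(B_i, A_i)$ together with any other oriented element reduces by the same size argument to $\{(B_i, A_i)\}$.

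Given this star structure, the leaf bags $A_i$ have size four and the adhesion equals the separator size, three. The central torso $X$ on the bag $\bigcap_i B_i$ is a faithful minor of $G$ by part~(ii) of \cref{greedyTangleProperties}. I distinguish by size: if $|X| \geq 5$, then $X$ is internally 4-connected by part~(iv) of \cref{greedyTangleProperties}; if $|X| = 4$, then $X$ is 3-connected by \cref{LiftingLemma} and equals $K_4$ as the only 3-connected graph on four vertices; if $|X| = 3$, then every size-three separator coincides with the bag so $X = K_3$. Smaller bags are impossible since separators of size three are contained in the bag, and the extreme case $N = \emptyset$ gives $X = G$ with $|X| \geq 5$. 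The main obstacle is the careful nestedness-versus-size case analysis needed to establish that $\sigma_C$ is a star.
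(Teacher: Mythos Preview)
Your approach is essentially the paper's: take a maximal 3-chop $N$, orient each element towards its side of size four, verify that the resulting set $\sigma_C$ is a star, and read off the central torso from \cref{greedyTangleProperties}. The paper's justification that $\sigma_C$ is a star is the same size-based nestedness analysis you describe, compressed to a single sentence.

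The case splits differ slightly. The paper first separates out (a) graphs with no proper 3-separation and (b) graphs admitting a proper 3-separation whose two sides have equal size, handling~(b) by a direct $K_{1,1}$-decomposition. Case~(b) is exactly the situation $|V(G)|=5$, and this is where your inequality $|B_i|>4$ would fail; within your framework the fix is to note that any two proper 3-separations of a 3-connected graph on five vertices cross (two non-edges sharing a vertex would force degree~$\le 2$), so $|N|\le 1$ there and the star property is automatic. Conversely, your explicit treatment of $K_{3,3}$ is correct and indeed required: every proper 3-separation of $K_{3,3}$ has a claw side, so its only 3-chop is empty and the 3-chop route alone would leave $K_{3,3}$ itself as the central torso, which is none of the three permitted types.
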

\begin{proof}
If $G$ has no proper 3-separation, then the trivial star-decomposition is as desired.
If $G$ has a proper 3-separation with two sides of equal size, then the $K_{1,1}$-decomposition of $G$ into the two sides is as desired (it does not matter which bag is the central one).
So suppose that $G$ has a proper 3-separation and that every proper 3-separation of~$G$ has a largest side.
Let $N$ be a maximal 3-chop of~$G$.
Let $\sigma$ consist of all $(A,B)$ with $\{A,B\}\in N$ and $|A|=4$.
As all elements of~$N$ are proper and have a unique side of size four, $\sigma$ must be a star.
The torso of~$\sigma$ is a $K_3$ or $K_4$ or internally 4-connected by \cref{greedyTangleProperties}.
\end{proof}

\begin{figure}[ht]
    \centering
    \includegraphics[height=4.5\baselineskip]{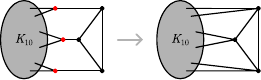}
    \caption{The converse of \cref{quasi4star} fails}
    \label{fig:quasiV4con}
\end{figure}

\begin{example}
The converse of \cref{quasi4star} fails in the following sense.
We claim that the graph $G$ depicted on the left-hand side of \cref{fig:quasiV4con} is 3-connected, has a star-decomposition of adhesion three such that the central torso is (internally) 4-connected, and all leaf-bags have size four -- but $G$ is not quasi 4-connected.
\end{example}
\begin{proof}
Let $N$ denote the set of all 3-separations whose separators are the neighbourhoods of the red vertices, and consider the star-decomposition defined by~$N$.
\end{proof}

\begin{lemma}\label{reduceTo3con}
For every $4$-tangle $\tau$ in a graph~$G$ there is $\varphi\colon G\rminor H$ such that $H$ is 3-connected and $\varphi(\tau)$ is a $4$-tangle in~$H$.
\end{lemma}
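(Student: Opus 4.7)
The plan is to induct on $|V(G)|$, reducing $G$ to a 3-connected minor by peeling off one low-order separation at a time on the side that $\tau$ avoids. If $G$ is already 3-connected we take $H=G$ and $\varphi=\id$, so that $\varphi(\tau)=\tau$ is a 4-tangle in~$H$. Otherwise $G$ admits a proper separation $\{A,B\}$ of order at most two, and since $\tau$ is a tangle it orients this separation, say $(A,B)\in\tau$. I would form the torso $G'$ of $\{A,B\}$ on~$B$ (that is, $G[B]$ with $A\cap B$ turned into a clique) and argue that $G'$ is a minor of $G$ via some $\varphi_1\colon G\rminor G'$ and that the push-forward $\varphi_1(\tau)$ is a 4-tangle in~$G'$. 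Applying the induction hypothesis to $(G',\varphi_1(\tau))$ then yields $\varphi_2\colon G'\rminor H$ with $H$ 3-connected and $\varphi_2(\varphi_1(\tau))$ a 4-tangle in~$H$, so that $\varphi:=\varphi_2\circ\varphi_1$ is as required.

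For $G'$ being a minor of $G$: when $|A\cap B|\le 1$ this is immediate, as then $G'=G[B]$ is a subgraph. When $|A\cap B|=2$, I would first dispose of any 0- or 1-separations of $G$ separately (in each case, $\tau$ picks out a unique component or piece in which it lives, and reducing $G$ to the corresponding subgraph minor preserves the tangle) so as to assume that $G$ is 2-connected. Writing $A\cap B=\{x,y\}$, Menger's theorem then produces an $xy$-path inside $G[A]$, and contracting this path to the edge $xy$ while deleting the remaining vertices of $A\setminus B$ realises $G'$ as a minor of $G$.

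For $\varphi_1(\tau)$ being a 4-tangle in $G'$: every oriented $({<}\,4)$-separation $(C,D)$ of $G'$ lifts to an oriented $({<}\,4)$-separation $(\hat C,\hat D)$ of $G$ of the same order by \cref{LiftingLemma} applied with $N=\{\{A,B\}\}$ and splitting star $\sigma=\{(A,B)\}$ (whose torso is exactly $G'$), with $A\setminus B$ absorbed on one side as forced by how the lift interlaces~$\sigma$. The tangle $\tau$ orients this lift, and $\varphi_1(\hat C,\hat D)=(C,D)$, so $\varphi_1(\tau)$ orients all $({<}\,4)$-separations of~$G'$. Consistency is inherited from $\tau$. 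The hard part is the third tangle axiom: if $G'[C_1]\cup G'[C_2]\cup G'[C_3]=G'$ with $(C_i,D_i)\in\varphi_1(\tau)$, then lifting each $(C_i,D_i)$ produces $\tau$-oriented $(\hat C_i,\hat D_i)$ whose left sides cover $G$, with $A\setminus B$ absorbed into whichever $\hat C_i$ carries the $\{x,y\}$-side and any appearance of the torso-edge on $A\cap B$ inside some $G'[C_i]$ realised in $G$ by the contracted $xy$-path through $G[A]$. This cover-lifting argument is the main obstacle: one must carefully synchronise the placement of $A\setminus B$ across the three lifts, exploiting that $(A,B)\in\tau$ already marks the entire set $A$ as tangle-small so that absorbing $A\setminus B$ into any of the three lifted sides introduces no new obstruction.
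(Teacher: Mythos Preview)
Your approach is correct but takes a genuinely different route from the paper's.  The paper argues in one shot: first pass to the block in which $\tau$ lives (reducing to $G$ 2-connected), then take a maximal nested set $N$ of proper 2-separations of~$G$.  By \cref{LiftingLemma} every torso of~$N$ is 3-connected or has at most three vertices; a splitting star with a $\le 3$-vertex bag cannot be contained in a 4-tangle, so $\tau$ includes a splitting star whose torso $H$ is 3-connected, and the paper simply asserts that $\varphi(\tau)$ is a 4-tangle in $H$ for any faithful $\varphi\colon G\rminor H$.  Your inductive peeling-off argument is more elementary in spirit (it never invokes a global tree-decomposition) but pays for this by having to verify the tangle axioms at every step.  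Both approaches end up relying on essentially the same unproved-but-standard fact, namely that a 4-tangle containing $(A,B)$ pushes forward to a 4-tangle in the torso on the $B$-side; the paper just hides this behind a single assertion, while you unpack it.

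One point in your sketch deserves tightening.  For the third tangle axiom you write that you lift each $(C_i,D_i)\in\varphi_1(\tau)$ and obtain ``$\tau$-oriented $(\hat C_i,\hat D_i)$'', with $A\setminus B$ absorbed ``into whichever $\hat C_i$ carries the $\{x,y\}$-side''.  Two things are imprecise here.  First, you must explain why the lift lies in~$\tau$: this follows because the two possible lifts of $\{C_i,D_i\}$ (with $A\setminus B$ on either side) are oriented the same way by~$\tau$, as otherwise $(A,B)$ together with the two opposite lifts would give three small sides covering~$G$.  Second, the clean choice is to put $A\setminus B$ on the $\hat C_i$-side for \emph{all three} lifts simultaneously (not just one), so that $\hat C_i = C_i\cup(A\setminus B)$; then the three left sides cover $G[A]$ as well as $G[B]$, and the contradiction with $\tau$ follows directly.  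Your closing remark that ``$(A,B)\in\tau$ marks $A$ as tangle-small'' is exactly the right intuition for both points, but the synchronisation you describe is not quite the one that works.
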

\begin{proof}
Let $\tau$ be a $4$-tangle in~$G$.
Then $\tau$ is the lift of a $4$-tangle in some block of~$G$, so we may assume without loss of generality that $G$ is 2-connected.
Let $N$ be a maximal nested set of proper 2-separations of~$G$.
Then every torso of~$N$ is either a complete graph on at most three vertices or 3-connected, by \cref{LiftingLemma}.
If the bag of a splitting star~$\sigma$ of~$N$ has size at most three, then $\sigma$ is not included in any $4$-tangle in~$G$.
Hence $\tau$ includes a splitting star of~$N$ whose torso~$H$ is 3-connected.
Then $\varphi(\tau)$ is a $4$-tangle in~$H$ for every faithful $\varphi\colon G\rminor H$.
\end{proof}

\begin{proof}[Proof of \cref{main}]
By \cref{internalUnique}, every internally 4-connected graph~$H$ has a unique 4-tangle~$\tau_H$.
It remains to show that, if $\theta$ is a 4-tangle in a graph~$G$, then there is a map $\varphi\colon G\rminor H$ to some internally 4-connected minor $H$ of~$G$ such that $\varphi(\theta)=\tau_H$.
Let $G$ and $\theta$ be given.
By \cref{reduceTo3con}, we may assume that $G$ is 3-connected.
As $K_{3,3}$ has no 4-tangle, we may assume that $G\neq K_{3,3}$.
Let $N$ be a maximal 3-chop of~$G$.
Then $\theta$ includes a unique splitting star~$\sigma$ of~$N$ with torso~$X$.
The result follows by \cref{greedyTangleProperties} \cref{N:Four} and \cref{N:Five}.
\end{proof}

\section{The large torsos of maximal 3-chops are unique up to isomorphism}\label{sec:Universality}

A 4-tangle $\tau$ in a 3-connected graph $G$ is \emph{cubic} if it includes a star of 3-separations of $G$ whose bag is a cubic vertex set in~$G$.
Note that $\tau$ is cubic if and only if there exist a cubic vertex set $X\se V(G)$ and a standard cube-minor $\varphi\colon G\rminor Q$ at~$X$ such that $\varphi(\tau)$ is the unique 4-tangle in~$Q$, by \cref{cubicTanglesInFourSets}.

A class $\cC$ of 3-chops of 3-connected graphs \emph{endorses 4-tangles} if for every non-cubic 4-tangle $\tau$ in a 3-connected graph~$G$ and every two $N_1,N_2\in\cC$ the splitting stars $\sigma_i$ of $N_i$ included in~$\tau$ (for $i=1,2$) have isomorphic internally 4-connected torsos.

The main result of this section is:

\begin{theorem}\label{universality}
The class of maximal 3-chops of 3-connected graphs endorses 4-tangles.
\end{theorem}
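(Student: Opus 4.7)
The plan is as follows. Let $G$ be 3-connected, let $\tau$ be a non-cubic 4-tangle in~$G$, and let $N_1,N_2$ be two maximal 3-chops of~$G$ with splitting stars $\sigma_i\se\tau$ and torsos $X_i$ for $i=1,2$. First I would verify that both $X_i$ are internally 4-connected. Since $\tau$ is non-cubic and $\sigma_i\se\tau$, \cref{greedyTangleProperties}\,\cref{N:Four} rules out $|X_i|\le 4$: the case $|X_i|\le 3$ would force $\sigma_i$ to be absent from any 4-tangle (via \cref{TangleDirection}), while $|X_i|=4$ would make $V(X_i)$ cubic and hence $\tau$ cubic. Thus $|X_i|\ge 5$, and \cref{greedyTangleProperties}\,\cref{N:Five} delivers that each $X_i$ is internally 4-connected (using $G\neq K_{3,3}$, since $K_{3,3}$ carries no 4-tangle).

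The heart of the proof is a key lemma to the effect that $X_1$ is a minor of~$X_2$; by symmetry, $X_2$ will then be a minor of~$X_1$. I would prove this by processing a claw-freeing enumeration $s_0,s_1,\ldots,s_n$ of~$N_1$, translating each $s_i$ to a 3-separation of~$X_2$ by restricting to the bag of~$\sigma_2$ and using the clique-additions that define~$X_2$. The main point to check is that claw-freeing transfers to the restricted enumeration: if $s_i$ was claw-free in~$G$, then the cycles on both sides guaranteed by \cref{3sepClawChar} project to cycles on both sides of the restriction in~$X_2$ via the faithful minor-map $G\rminor X_2$ from \cref{greedyTangleProperties}\,\cref{N:tff}; if instead $s_i$'s position in the enumeration relied on an earlier~$s_j$ with overlapping separator, that overlap survives restriction because the separator vertices of~$\sigma_1$ are included in~$V(X_2)$ by the consistency of~$\tau$ on $\sigma_1\cup\sigma_2$. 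Once this is established, \cref{chopExtension} applied inside the 3-connected torso~$X_2$ yields that the splitting star of the restricted 3-chop corresponding to~$\sigma_1$ is torso-faithful, and I would identify that torso with~$X_1$ by matching clique-additions side-by-side along~$\sigma_1$.

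Finally, mutual minor forces isomorphism. From $X_1\rminor X_2$ and $X_2\rminor X_1$, the vertex counts agree, so every branch set of either minor-map is a singleton; hence each $X_i$ is a spanning subgraph of~$X_{3-i}$, and comparing edge-counts forces $X_1\cong X_2$. The main obstacle is the transfer of claw-freeing to the restricted enumeration together with the precise identification of the resulting torso as~$X_1$: this is exactly where the explicit use of a claw-freeing linear ordering — foreshadowed by the discussion after \cref{eg:tffNotFreeable}, which warns that an ordinary torso-faithful nested set would not suffice — becomes essential.
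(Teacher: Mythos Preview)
Your opening (ruling out small torsos via \cref{greedyTangleProperties}) and your closing (mutual minor forces isomorphism) are fine, but the central step has a genuine gap. You propose to restrict the separations of~$N_1$ to~$X_2$ and assemble a 3-chop of~$X_2$ whose central torso is~$X_1$. This restriction is not well-defined. Your justification that ``the separator vertices of $\sigma_1$ are included in $V(X_2)$ by the consistency of $\tau$ on $\sigma_1\cup\sigma_2$'' does not hold: the two stars $\sigma_1,\sigma_2\se\tau$ can cross, so an element $(A,B)\in\sigma_1$ may cross some $(C,D)\in\sigma_2$, placing a vertex of $A\cap B$ in $C\sm D$ and hence outside $V(X_2)=\bigcap_{(C,D)\in\sigma_2}D$. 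Even when $A\cap B\se V(X_2)$, the pair $(A\cap V(X_2),B\cap V(X_2))$ need not be a separation of~$X_2$, because a torso-edge of~$X_2$ (joining two vertices of some separator $C\cap D$) may have its endpoints on opposite sides. So there is no direct way to turn~$\sigma_1$ into a 3-chop of~$X_2$, and your appeal to \cref{chopExtension} inside~$X_2$ has nothing to act on.

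The paper sidesteps this by working in the opposite direction: rather than projecting~$\sigma_1$ into~$X_2$, it fixes a faithful $\Delta$-minor-map $\varphi_1\colon G\rminor X_1$ and shows that this map survives the passage from~$G$ to~$X_2$. The key observation (\cref{branchSetIntersection}) is that for every claw-free $(A,B)\in\tau$, every branch set of~$\varphi_1$ meets~$B$; this uses internal 4-connectivity of~$X_1$ and the $\Delta$-minor property to rule out a claw on the image side. One then processes a claw-freeing ordering of~$\sigma_2$ (not $\sigma_1$, and not all of~$N_2$) inductively: the first element $(A,B)$ is claw-free, so by \cref{branchSetIntersection} the restriction of~$\varphi_1$ to~$B$ still witnesses $X_1$ as a $\Delta$-minor of the torso of~$\{(A,B)\}$; iterating through~$\sigma_2$ yields $X_1$ as a $\Delta$-minor of~$X_2$. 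This is \cref{MinorsOfEachOther}, and it is precisely here that the claw-freeing ordering earns its keep, as foreshadowed after \cref{eg:tffNotFreeable}.
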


\begin{example}
In the definition of `endorses 4-tangles', we do no require that cubic 4-tangles always include $\sigma_i$ whose bags are cubic vertex sets, because this is not true for maximal 3-chops.
To see this, let $Q$ be a cube with bipartition classes $A$ and~$B$.
Let $G$ be obtained from $Q$ by replacing an arbitrary number of vertices $a\in A$ with large cliques and joining them completely to all neighbours of~$a$ in~$B$.
Then $G$ has a unique cubic 4-tangle~$\tau$, which includes a splitting star $\sigma$ of every maximal 3-chop of~$G$.
The bag of~$\sigma$ can contain anywhere between four and eight vertices.
\end{example}

To prove \cref{universality}, we use some machinery from~\cite{Canonical3arXiv}.
For this, we assume familiarity with a few terms from~\cite{Canonical3arXiv}, such as \lq tri-separations\rq,  their \lq reductions\rq, and \lq totally nested\rq.

\subsection{Proof overview}

Consider an arbitrary 3-connected graph~$G$ with a non-cubic 4-tangle~$\tau$.
Let $N_1,N_2$ be maximal 3-chops of~$G$ with splitting stars~$\sigma_1,\sigma_2\se\tau$ respectively.
If the entire graph $G$ is internally 4-connected, then $N_1=\emptyset=N_2$, so we are done immediately.
Hence by a result from~\cite{Canonical3arXiv}, we may assume that $G$ has a totally-nested tri-separation~$\{C,D\}$.
The 4-tangle $\tau$ will live either in $C$ or in~$D$, say in~$D$ (\cref{tauOrientsTrisep}).

If the separator of $\{C,D\}$ consists of three vertices, then we can show that $\{C,D\}$ is nested with all separations in $N_1$ and $N_2$, so $\{C,D\}$ lies in both $N_1$ and $N_2$ by maximality of the 3-chops.
In this case, the splitting stars~$\sigma_i$ live in the side~$D\subsetneq V(G)$, so we are essentially done by induction.

Otherwise, the separator of $\{C,D\}$ contains an edge~$e$ with endvertices $c\in C\sm D$ and $d\in D\sm C$.
Here we plan to apply induction to~$G/e$.
For this, we have to show that $e$ has at least one endvertex $v_i$ outside of the torso of~$\sigma_i$, to preserve internal 4-connectivity of the torso.
The endvertex $v_i$ may depend on~$i$.
To find~$v_i$, we obtain a 3-separation $\{C_i,D_i\}$ from $\{C,D\}$ that is nested with all separations in~$N_i$ (\cref{triSepExtendsChop}).
The separator of $\{C_i,D_i\}$ consists of the vertices in $C\cap D$ plus a choice of an endvertex of each edge in the separator of $\{C,D\}$.
For the edge $e$, we try to choose its endvertex $d$ to be in $C_i\cap D_i$.
If successful, we may then take $v_i:=c\in C_i\sm D_i$, as it is not hard to show that $\sigma_i$ lives in~$D_i$.
Otherwise there is $\{X_i,Y_i\}\in N$ that witnesses why we could not choose~$d$: the vertex $c$ lies in $X_i\cap Y_i$ but $d$ lies in $Y_i\sm X_i$, say.
Moreover, $d$ is the unique neighbour of $c$ in $Y_i\sm X_i$.
Here we argue that $\sigma_i$ lives in~$X_i$ (\cref{v-path} and \cref{claim7}), which allows us to take $v_i:=d$.
This will complete the proof.

Using the totally-nested tri-separation $\{C,D\}$ has the following advantage: The edge $e$ in the above overview must be chosen independently of~$i$, yet $e$ must have an endvertex outside the torso of each~$\sigma_i$, and $\{C,D\}$ readily provides such an edge~$e$ (or we are done almost immediately).
We have explored proofs of \cref{universality} without tri-separations, but they all eventually led us to considerations that are reminiscent of tri-separations.

\subsection{Proof of \texorpdfstring{\cref{universality}}{Theorem 4.1}}

Let $(C,D)$ be a non-trivial tri-separation of a 3-connected graph~$G$.
The \emph{right-shift} of $(C,D)$ is the 3-separation $(\hat C,D)$ where $\hat C$ is obtained from $C$ by adding every endvertex in $D\sm C$ of every edge in the separator of~$(C,D)$.
The \emph{left-shift} $(C,\hat D)$ is defined similarly, with the roles of $C$ and $D$ reversed.

\begin{lemma}\label{tauOrientsTrisep}
    Let $G$ be a 3-connected graph with a tri-separation $(C,D)$ and a 4-tangle~$\tau$.
    Let $(C,\hat D)$ and $(\hat C,D)$ denote the left-shift and the right-shift, respectively.
    Then $\tau$ lives either in $C$ or in $D$ in the following sense: either $(\hat D,C)\in\tau$ or $(\hat C,D)\in\tau$.
\end{lemma}
\begin{proof}
    Assume that $(C,\hat D)\in\tau$, say.
    If the separator of $(C,D)$ consists of vertices only, this means that $(C,\hat D)=(\hat C,D)\in\tau$ and we are done.
    So write $(C_0,D_0):=(C,\hat D)$ and assume that $cd$ is an edge in the separator of $(C,D)$ with $c\in C_0\sm D_0$ and $d\in D_0\sm C_0$.
    Let $C_1:=C_0\cup \{d\}$ and $D_1:=D_0\sm\{c\}$, so the separator of $(C_1,D_1)$ is obtained from the separator of $(C_0,D_0)$ by replacing $c$ with~$d$.
    Then $G=G[C_0]\cup G[D_1]\cup G[\{c,d\}]$.
    We apply the tangle-property to the following three separations: $(C_0,D_0)$, $(D_1,C_1)$ and $(\{c,d\},V(G))$.
    Since $(C_0,D_0)=(C,\hat D)\in\tau$ by assumption and $(\{c,d\},V(G))\in\tau$ as $\tau$ is a tangle, this yields $(C_1,D_1)\in\tau$.
    Proceeding in this manner for any other edges in the separator of $(C,D)$, we eventually find that $(C_k,D_k)=(\hat C,D)\in\tau$.
\end{proof}

\begin{lemma}\label{trisepVS3sep}
    Let $G$ be a 3-connected graph with a tri-separation $(C,D)$.
    Let $(A,B)$ be an oriented proper 3-separation of~$G$, and let $(\bar A,\bar B)$ be the reduction of $(A,B)$.
    Assume that $(\bar A,\bar B)\le (C,D)$.
    Then the right-shift $(\hat C,D)$ of $(C,D)$ satisfies $(A,B)\le (\hat C,D)$.
\end{lemma}
\begin{proof}
    Since $B\supseteq \bar B\supseteq D$, it remains to show that $A\se \hat C$.
    For this, assume for a contradiction that there is a vertex $v\in A\sm \hat C$.
    Since $\bar A\se C\se\hat C$, this means that $v\in A\cap B$ was reduced to an edge $vw$ in the separator of $(\bar A,\bar B)$, and $w\in \bar A\sm \bar B\se C$.
    The vertex $w$ cannot lie in $C\sm D$ as $v\notin\hat C$.
    So $w$ must lie in $C\cap D$.
    Hence $w\in D\sm \bar B$, which contradicts that we have $\bar B\supseteq D$.
\end{proof}

\begin{corollary}
    Let $G$ be a 3-connected graph with a totally-nested non-trivial tri-separation $(C,D)$ such that the separator of $(C,D)$ consists of three vertices.
    Then $(C,D)$ is nested with every proper 3-separation of~$G$.
\end{corollary}
\begin{proof}
     Let $(\bar A,\bar B)$ be the reduction of $(A,B)$.
     Since $(\bar A,\bar B)$ is a tri-separation, it is nested with the totally nested tri-separation $(C,D)$; say $(\bar A, \bar B)\le (C,D)$.
     Then $(A,B)\le (C,D)$ by \cref{trisepVS3sep}.
\end{proof}

\begin{lemma}\label{triSepExtendsChop}
    Let $G$ be a 3-connected graph, and let $N$ be a nested set of proper 3-separations of~$G$.
    Let $(C,D)$ be a totally-nested non-trivial tri-separation of~$G$.
    Then there is a 3-separation $\{C',D'\}$ of $G$ that is nested with all separations in~$N$ such that $C\se C'$ and $D\se D'$.
    Moreover, $\{C',D'\}$ can be chosen so that for each edge $cd$ in the separator of $(C,D)$ with $c\in C\sm D$ and $d\in D\sm C$, we either have $c\in C'\sm D'$ and $d\in C'\cap D'$ or some separation in $N$ has an orientation $(A,B)$ such that $c\in A\cap B$ and $d$ is the unique neighbour of $c$ in $B\sm A$.
\end{lemma}
\begin{proof}
    Write $\{\,\{A_i,B_i\}:i\in I\,\}:=N$.
    For each $i\in I$, denote the reduction of $(A_i,B_i)$ by $(\bar A_i,\bar B_i)$.
    Since $(C,D)$ is totally-nested, it is nested with every $(\bar A_i,\bar B_i)$. 
    Without loss of generality, the sides $A_i$ and $B_i$ are named so that we have $(\bar A_i,\bar B_i)\le (C,D)$ or $(C,D)\le (\bar A_i,\bar B_i)$ for all~$i\in I$.
    Let $\gamma$ be the set of all $i\in I$ with $(\bar A_i,\bar B_i)\le (C,D)$.
    Similarly, let $\delta$ be the set of all $i\in I$ with $(C,D)\le (\bar A_i,\bar B_i)$.
    Let $C'':=C\cup\bigcup_{i\in\gamma} A_i$ and $D'':=D\cup\bigcup_{j\in\delta} B_j$.
    Then $\{C'',D''\}$ is a mixed-separation of $G$ which satisfies $C\se C''$ and $D\se D''$.

    \begin{claim}
    $\{C'',D''\}$ has order~3.
    \end{claim}
    \begin{cproof}
    Let $(\hat C,D)$ and $(\hat D,C)$ denote the right-shift and left-shift of $(C,D)$, respectively.
    We have $C\se C''\se \hat C$ and $D\se D''\se\hat D$ by \cref{trisepVS3sep}.
    Hence 
    \begin{equation}\label{eq:shiftSqueeze}
        (C,\hat D)\le (C'',D'')\le (\hat C,D).
    \end{equation}    
    So it suffices to show that for every edge $cd$ in the separator of $\{C,D\}$ with $c\in C\sm D$ and $d\in D\sm C$, not both $c$ and $d$ are in the separator of $\{C'',D''\}$.
    Assume for a contradiction that $\{c,d\}\se C''\cap D''$.
    Then there are indices $i\in\gamma$ and $j\in\delta$ such that $d\in A_i$ and $c\in B_j$.
    Since $d\in A_i\sm C$ but $\bar A_i\se C$, the vertex $d$ must have been reduced to an edge in the separator of $(\bar A_i,\bar B_i)$, and this edge can only be $cd$.
    Hence $c\in A_i\sm B_i$.
    However, $(\bar A_i,\bar B_i)\le (C,D)\le (\bar A_j,\bar B_j)$ implies $(A_i,B_i)\le (A_j,B_j)$ as $\{A_i,B_i\}$ and $\{A_j,B_j\}$ are nested by assumption.
    So $c\in B_j\se B_i$ contradicts $c\in A_i\sm B_i$.
    \end{cproof}\medskip

    Next, we show that $\{C'',D''\}$ is nested with $\{A_i,B_i\}$ for all~$i\in I$.
    By symmetry we may assume $i\in\gamma$.
    Then $A_i\se C''$ by definition of~$C''$, on the one hand.
    On the other hand, $B_i\supseteq D\cup\bigcup_{j\in\delta}B_j$ since $(A_i,B_i)\le (A_j,B_j)$ for all $j\in\delta$, as above.
    The union on the right-hand side equals~$D''$.
    Hence $\{C'',D''\}$ is nested with all~$\{A_i,B_i\}$.

    We turn $\{C'',D''\}$ into a 3-separation $\{C',D'\}$ of~$G$ by taking $D':=D''$ and obtaining $C'$ from $C''$ by adding the endvertex in $D''\sm C''$ of every edge in the separator of~$\{C'',D''\}$.

    \begin{claim}\label{theException}
        Let $cd$ be an edge in the separator of $(C,D)$ with $c\in C\sm D$ and $d\in D\sm C$.
        Assume that either $c\notin C'\sm D'$ or $d\notin C'\cap D'$.
        Then there is $i\in I$ such that $c\in A_i\cap B_i$ and $d$ is the unique neighbour of $c$ in~$B_i\sm A_i$.
    \end{claim}
    \begin{cproof}
        The assumption implies that $c$ is contained in the separator of $\{C'',D''\}$.
        So there is $i\in\delta\se I$ such that $c\in B_i$.
        Since $C\se\bar A_i\se A_i$, we have $c\in A_i\cap B_i$.
        As $G$ is 3-connected, $c$~has a neighbour in $B_i\sm A_i\se \hat D\sm C=D\sm C$ (using \cref{trisepVS3sep}).
        The vertex $d$ is the unique neighbour of $c$ in $D\sm C$, so it also is the unique neighbour of $c$ in $B_i\sm A_i$.
    \end{cproof}\medskip

    The `moreover'-part of the lemma holds by \cref{theException}.
\end{proof}

\begin{lemma}\label{v-path}
Let $G$ be a 3-connected graph, and let $N$ be a nested set of proper 3-separations of~$G$. 
Assume that for some $\{C,D\}\in N$, there is a vertex $v\in C\cap D$ that has a unique neighbour $w$ in $D\sm C$.
Let $O$ denote the set of all 3-separations $(A,B)$ with $\{A,B\}\in N$ and $(C,D)\le (A,B)$ and $v\in A\cap B$.
Then $w\in B\sm A$ for all $(A,B)\in O$, and $\le$ linearly orders~$O$.
\end{lemma}

\begin{proof}
Let $(A,B)\in O$. Since $G$ is 3-connected, the vertex $v\in A\cap B$ has a neighbour in $B\sm A$.
Since $(C,D)\leq (A,B)$ we have that $B\sm A\se D\sm C$. 
Since $v$ only has the neighbour $w$ in $D\sm C$, we conclude that $w\in B\sm A$.

Now to show that $\le$ linearly orders~$O$, let $(A,B),(A',B')\in O$.
Since both $\{A,B\}$ and $\{A',B'\}$ are contained in the nested set~$N$, they have orientations that can be compared by~$\le$.
Since $(C,D)\le (A,B)$ and $(C,D)\le (A',B')$, it must hold, after possibly interchanging the names of $(A,B)$ and $(A',B')$, that either $(A',B')\le (A,B)$ or $(B',A')\le (A,B)$.
But the latter case contradicts $w\in (B'\sm A')\cap (B\sm A)$.
\end{proof}

Given a tangle $\tau$ and an edge $e=vw$, we say that a separation $\{A,B\}$ \emph{separates} $\tau$ from $e$ if $\tau$ lives in $A$ and one of the endvertices $v$ or $w$ of $e$ is in $B\sm A$; or the same with the roles of \lq $A$\rq\ and \lq $B$\rq\ interchanged.

\begin{lemma}\label{claim7}
Let $G$ be a 3-connected graph, and let $N$ be a nested set of proper 3-separations of~$G$.
Let $\tau$ be a 4-tangle in~$G$, and let $\sigma$ be the splitting star of $N$ with $\sigma\se\tau$.
Assume that the torso $H$ of $\sigma$ is internally $4$-connected.
Assume that there is $\{C,D\}\in N$ with a vertex $v\in C\cap D$ such that $v$ has a unique neighbour $w$ in $D\sm C$.
Then some $\{A,B\}\in N$ separates $\tau$ from $e:=vw$.
\end{lemma}

\begin{proof}
Let $O$ be defined as in the statement of \autoref{v-path}. 
By that lemma, $\le$ linearly orders~$O$.
So we may write the elements of $O$ as
\[
    (A_0,B_0) < (A_1,B_1) < \cdots < (A_n,B_n)
\]
with $w\in B_i\sm A_i$ for all $i\le n$.
Note that $(A_0,B_0)=(C,D)$.

\begin{claim}\label{not-the-end}
There is $i\le n$ such that $\tau$ orients $\{A_i,B_i\}$ towards $A_i$, or some $\{X,Y\}\in N$ separates $\tau$ from $e=vw$.
\end{claim}
\begin{cproof}
Suppose that $\tau$ orients $\{A_i,B_i\}$ towards $B_i$ for all~$i\le n$.
As $\sigma\se\tau$ and $\tau$ orients $\{A_n,B_n\}\in N$ towards $B_n$, there is $(X,Y)\in\sigma$ with $(A_n,B_n)\le (X,Y)$.
In particular, $\tau$ orients $\{X,Y\}$ towards~$Y$.

Assume first that $(A_n,B_n)<(X,Y)$.
So $(X,Y)\notin O$, which combined with $(C,D)\le (X,Y)$ means that $v$ is contained in $X\sm Y$.
Thus $\{X,Y\}$ separates $\tau$ from $e=vw$.

So we may assume that $(A_n,B_n)=(X,Y)$.
We will show that this case is impossible.
Since $(A_n,B_n)$ is the $\le$-maximal element of~$O$, it also is the only element of $\sigma$ in~$O$.
By the definition of a star, every $(U,W)\in\sigma$ distinct from $(A_n,B_n)$ satisfies $(C,D)\le (A_n,B_n)<(W,U)$.
As $(W,U)\notin O$, the vertex $v$ cannot be contained in $U\cap W$; thus $v\in W\sm U$ and so $w\in W$.
Therefore, $(A_n,B_n)$ is the only element of $\sigma$ that contains $v$ in its separator.
As $w$ is the unique neighbour of $v$ in $B_n\sm A_n$, it follows that $v$ has exactly three neighbours in the torso $H$ of $\sigma$: the vertex $w$ and the two vertices $s,t$ in the separator $A_n\cap B_n$ besides~$v$.
As $A_n\cap B_n$ induces a triangle in~$H$, the two vertices $s,t$ form an edge in~$H$.
Then $v$ is a vertex of $H$ with degree three but such that two of its neighbours are joined by an edge, contradicting that $H$ is internally $4$-connected.
Hence $(A_n,B_n)=(X,Y)$ is impossible.
\end{cproof}\medskip

If there is $i\le n$ such that $\tau$ orients $\{A_i,B_i\}$ towards~$A_i$, then $\{A_i,B_i\}$ separates $\tau$ from $e=vw$ as $w\in B_i\sm A_i$.
Otherwise some $\{X,Y\}\in N$ separates $\tau$ from $e$ by \cref{not-the-end}.
\end{proof}

\begin{proof}[Proof of \cref{universality}]
Let $G$ be a 3-connected graph with a non-cubic 4-tangle~$\tau$.
Let $N_1,N_2$ be maximal 3-chops of~$G$.
For each $i=1,2$ let $\sigma_i$ denote the splitting star of~$N_i$ included in~$\tau$.
Let $H_i$ denote the torso of~$\sigma_i$.
Then both $H_i$ are internally 4-connected by \cref{greedyTangleProperties}.
We have to find an isomorphism $H_1\to H_2$.

We proceed by induction on the number of vertices of~$G$.
The induction starts with the case that $G$ is internally 4-connected, as here both $N_i$ are empty so that $H_1=G=H_2$.

For the induction step, assume now that $G$ is not internally 4-connected.
By the angry tri-separation theorem \cite[Theorem~1.1.5]{Canonical3arXiv}, either $G$ has a totally-nested non-trivial tri-separation, or $G$ is a wheel, or $G$ is a $K_{3,m}$ for some $m\ge 3$.
As wheels and $K_{3,m}$'s have no 4-tangles, we find that $G$ has a totally-nested non-trivial tri-separation~$(C,D)$.
We consider two cases.

\underline{In the first case}, the tri-separation $(C,D)$ only has vertices in its separator, so $\{C,D\}$ is a proper 3-separation.
By \cref{triSepExtendsChop}, $\{C,D\}$ is nested with all separations in both~$N_i$.
Since $\{C,D\}$ is non-trivial, it is claw-free.
So $\{C,D\}\in N_i$ for both~$i$ by maximality.
Without loss of generality, $(C,D)\in\tau$.
Let $G^D$ denote the torso of $\{(C,D)\}$ with vertex-set~$D$.
Each $N_i$ projects to a maximal 3-chop $N^D_i$ of $G^D$ which includes the projection $\sigma^D_i$ of~$\sigma_i$.
Hence the torso $H^D_i$ of $\sigma^D_i$ equals~$H_i$ and, in particular, it is internally 4-connected.
So each $H^D_i$ has a unique 4-tangle $\tau^D_i$ by \cref{internalUnique}.
Since $H^D_i=H_i$, its unique 4-tangle $\tau^D_i$ lifts to~$\tau$ in~$G$.
Hence both $\tau^D_i$ lift to the same 4-tangle $\tau^D$ in~$G^D$.
The 4-tangle $\tau^D$ is non-cubic, as any star of 3-separations of $G^D$ that says otherwise lifts to a star of 3-separations of $G$ showing that $\tau$ is cubic after all.
Applying the induction hypothesis to the maximal 3-chops $N^D_i$ of $G^D$ and the non-cubic 4-tangle~$\tau^D$ yields an isomorphism $H^D_1\to H^D_2$.
Since $H^D_i=H_i$ this also is an isomorphism $H_1\to H_2$.

\underline{In the second case}, the separator of $(C,D)$ contains an edge $e=cd$, named so that $c\in C\sm D$ and $d\in D\sm C$.
By \cref{tauOrientsTrisep}, we may assume that $(\hat C,D)\in\tau$, where $(\hat C,D)$ denotes the right-shift of~$(C,D)$.
By \cref{triSepExtendsChop}, for both~$i$ there is a 3-separation $(C_i,D_i)$ of $G$ that is nested with all separations in~$N_i$ such that $C\se C_i$ and $D\se D_i$.
Moreover, the lemma allows us to choose $(C_i,D_i)$ so that either $c\in C_i\sm D_i$ and $d\in C_i\cap D_i$, or some separation in $N_i$ has an orientation $(A_i,B_i)$ such that $c\in A_i\cap B_i$ and $d$ is the unique neighbour of $c$ in $B_i\sm A_i$.

\begin{claim}
Each $N_i$ has an element $\{X_i,Y_i\}$ which $\tau$ orients towards~$Y_i$ while at least one of $c,d$ is contained in~$X_i\sm Y_i$.
\end{claim}
\begin{cproof}
If $c\in C_i\sm D_i$ and $d\in C_i\cap D_i$, then $\{C_i,D_i\}$ is proper and claw-free as $(C,D)$ is non-trivial and as $\tau$ orienting $\{\hat C,D\}$ towards~$D$ implies that $G[D]$ contains a cycle by \cref{4tangleAvoidsClaw}.
Then, as $\{C_i,D_i\}$ is nested with all separations in~$N_i$, we get that $\{C_i,D_i\}$ is contained in~$N_i$ by maximality, so we may take $(X_i,Y_i):=(C_i,D_i)$.
Otherwise there is $\{A_i,B_i\}\in N_i$ such that $c\in A_i\cap B_i$ and $d$ is the unique neighbour of $c$ in $B_i\sm A_i$.
Then \cref{claim7} produces a suitable~$\{X_i,Y_i\}$.
\end{cproof}\medskip

Finally, we perform the induction.
The graph $G':=G/e$ is 3-connected as $e$ stems from a tri-separator.
Let $M_i$ consist of all $\{A,B\}\in N_i$ such that $(X_i,Y_i)\le (A,B)$ or $(X_i,Y_i)\le (B,A)$.
Note that $\sigma_i$ is a splitting star of~$M_i$.
Let $M'_i$ be the projection of $M_i$ to $G/e$, which is a 3-chop of $G/e$, but it is not clear a priori that it is a maximal one.
Let $N'_i$ be an arbitrary extension of $M'_i$ to a maximal 3-chop of~$G/e$.
Since at most one of the endvertices $c,d$ of $e$ is contained in $Y_i$,
the splitting star $\sigma_i$ carries over to a splitting star~$\sigma'_i$ of~$N'_i$ so that the torso $H_i$ of $N_i$ equals the torso $H'_i$ of $\sigma'_i$ (up to possibly renaming $c$ or~$d$).
In particular, we can argue similarly to the first case that the 4-tangle $\tau'_i$ of $H'_i$ lifts to a 4-tangle $\tau'$ of $G'$ which is the same for both~$i$.
Moreover, $\tau'$ is not cubic, also by a similar argument.
We may therefore apply the induction hypothesis to the maximal 3-chops $N'_i$ of $G'$ and the non-cubic 4-tangle~$\tau'$ to find an isomorphism $H'_1\to H'_2$.
Each torso $H_i$ is isomorphic to $H'_i$, and we combine all three isomorphisms to an isomorphism $H_1\to H_2$.
\end{proof}

\section{A proof of Kuratowski's Theorem via internal 4-connectedness}\label{sec:Kuratowski}

\begin{lemma}\label{FaceBoundaryCycle}
\cite[Prop.~4.2.6]{DiestelBookCurrent}
Every face of a 2-connected plane graph is bounded by a cycle.
\end{lemma}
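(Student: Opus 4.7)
My plan is to prove the lemma by induction using the ear decomposition of 2-connected graphs, which states that any 2-connected graph $G$ can be built from a cycle $C_0$ by successively adding ears, i.e.\ $G=C_0\cup P_1\cup\cdots\cup P_n$ where each $P_i$ is a path (possibly trivial, i.e.\ a single edge) whose endpoints lie in the previously constructed graph $G_{i-1}:=C_0\cup P_1\cup\cdots\cup P_{i-1}$ and whose internal vertices are new. Each $G_i$ is 2-connected. I will maintain as an inductive hypothesis the stronger statement that in the plane graph $G_i$ (inherited from the ambient embedding of $G$), every face is bounded by a cycle.

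The base case is the cycle $C_0$, whose two faces are each bounded by $C_0$ itself by the Jordan curve theorem. For the inductive step, assume the claim holds for $G_{i-1}$ and consider adding the ear $P=P_i$ with endpoints $u,v$ on $G_{i-1}$. Since the internal points of $P$ are disjoint from $G_{i-1}$, the open arc $P\smallsetminus\{u,v\}$ lies in the interior of some face $f$ of $G_{i-1}$. By the induction hypothesis, $f$ is bounded by a cycle $D$. The endpoints $u,v$ lie on the boundary of $f$, hence on $D$, and split $D$ into two arcs $D_1,D_2$. By the Jordan curve theorem applied to the simple closed curves $D_1\cup P$ and $D_2\cup P$ in the closed region $\closure f$, the ear $P$ partitions $f$ into exactly two new faces $f_1,f_2$ of $G_i$, bounded respectively by the cycles $D_1\cup P$ and $D_2\cup P$. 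All other faces of $G_{i-1}$ persist as faces of $G_i$ and remain bounded by the same cycles.

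Finally, to deduce the lemma for $G$ itself, note that $G=G_n$, so by induction every face of $G$ is bounded by a cycle. The only subtle point is a degenerate case: when the ear $P$ is a single new edge (so $P$ has no internal vertices), one of the two arcs $D_1,D_2$ might be a single edge of $D$, in which case adding $P$ creates a parallel edge and the two new bounding cycles include a 2-cycle; this is compatible with the standing convention in Diestel's book that cycles may be 2-cycles in multigraphs, but if we work with simple graphs this case does not arise for proper ears. The main obstacle is justifying the topological step cleanly, i.e.\ that an open arc in a face with endpoints on its bounding cycle partitions the face into two regions whose boundaries are the promised cycles; this is a direct application of the Jordan curve theorem together with the fact that $D$ itself already bounds $f$, and I would invoke it explicitly rather than reprove it.
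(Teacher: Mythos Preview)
The paper does not give its own proof of this lemma; it simply quotes it as Proposition~4.2.6 from Diestel's textbook and uses it as a black box. So there is nothing in the paper to compare your argument against.

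That said, your ear-decomposition proof is correct and is one of the standard routes to this result. Two small points are worth tightening. First, in the inductive step you invoke the Jordan curve theorem ``in the closed region~$\closure f$''; when $f$ is the unbounded face this closure is not a closed disk, so the cleanest fix is to pass to the sphere (or apply Jordan--Schoenflies to identify $\closure f$, plus the point at infinity if needed, with a closed disk) so that the arc-splits-disk argument applies uniformly. Second, your remark about 2-cycles arising from single-edge ears is unnecessary here: in a simple graph a single-edge ear $uv$ cannot have $u,v$ adjacent in $G_{i-1}$, so in particular $u,v$ are not adjacent on the bounding cycle~$D$, and both arcs $D_1,D_2$ have length at least two.
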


\begin{lemma}\label{cycleUV}
Let $G$ be an excluded minor for the class of planar graphs.
If $G$ is internally 4-connected, then $G-u-v$ is a cycle for every edge $uv$ of~$G$.
\end{lemma}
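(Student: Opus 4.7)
Let $H:=G-u-v$. Since $G$ is an excluded minor for planarity and $H$ is a proper minor of $G$, the graph $H$ is planar; since $G$ is $3$-connected (as internal $4$-connectedness implies this), $H$ is also connected. As the first structural step I would verify that $H$ is $2$-connected with minimum degree at least~$2$: a cut vertex $x$ of $H$ would make $\{u,v,x\}$ a $3$-separator of $G$ containing the edge $uv$; and a vertex $x$ of degree at most~$1$ in $H$ would, using $\deg_G(x)\ge 3$, force $u,v\in N_G(x)$ and yield a $3$-separator $\{u,v,y\}$ of $G$ containing the edge $uv$, where $y$ is the possible remaining neighbour of $x$. Both cases contradict internal $4$-connectedness, which demands every $3$-separator to be independent. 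By \cref{FaceBoundaryCycle}, every face of $H$ in any planar embedding is then bounded by a cycle.

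It remains to show that the maximum degree of $H$ is at most~$2$, equivalently that the planar embedding of $H$ has exactly two faces, equivalently $|E(H)|=|V(G)|-2$. To prove this I would analyse the planar embedding of $G-uv$, which is planar as a proper subgraph of $G$ and $2$-connected since $G$ is $3$-connected, so each of its faces is again a cycle by \cref{FaceBoundaryCycle}. Because $G$ is non-planar, $u$ and $v$ cannot share a face in this embedding: otherwise the edge $uv$ could be re-added planarly, yielding a planar embedding of $G$. The $\deg_G(u)-1$ faces incident to $u$ and the $\deg_G(v)-1$ faces incident to $v$ are therefore pairwise disjoint, and by Euler's formula the embedding has $|E(G)|-|V(G)|+1$ faces in total; the target identity translates precisely to the statement that every face of $G-uv$ contains $u$ or~$v$.

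The main obstacle will be ruling out a face $F_0$ of the embedding of $G-uv$ that contains neither $u$ nor $v$; such an $F_0$ is bounded by a cycle $C_0$ in $H$. To derive a contradiction I plan to use that $G/uv$ is also $3$-connected: any $2$-separator of $G/uv$ containing the contracted vertex~$w$ would, combined with the edge $uv$, yield a non-independent $3$-separator $\{x,u,v\}$ of $G$, contradicting internal $4$-connectedness, and any $2$-separator of $G/uv$ avoiding $w$ would already be a $2$-separator of $G$. By Whitney's theorem, $G/uv$ thus admits a unique planar embedding, in which the cyclic arrangement of $w$'s neighbours is rigid. The existence of the hypothetical face $F_0$ together with this rigid cyclic order around $w$ should enable a suitable planar split of $w$ back into $u,v$, producing a planar embedding of $G$ and contradicting non-planarity.
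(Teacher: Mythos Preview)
Your first paragraph is fine, and in fact more explicit than the paper about why $G-u-v$ is $2$-connected. The trouble is in the third paragraph.

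The face-counting reduction via $G-uv$ is correct arithmetic, but then you must rule out a face $F_0$ of your chosen embedding of $G-uv$ avoiding both $u$ and $v$, and your plan for this does not work as stated. You jump to the unique embedding of $G/uv$ and propose to ``split $w$ back into $u,v$'' using~$F_0$. But $F_0$ lives in an embedding of $G-uv$, not of $G/uv$; since $H=G-u-v$ is in general only $2$-connected, the embeddings of $H$ induced by $G-uv$ and by $G/uv$ need not agree, so $F_0$ need not be a face in the second picture at all. More fundamentally, the condition for splitting $w$ planarly into $u$ and $v$ (with the edge $uv$) is that the neighbours of $u$ and those of $v$ form two arcs in the rotation at~$w$; you give no argument linking the existence of $F_0$ to this arc condition, and I do not see one. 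So the last step is a genuine gap, not just a sketch.

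The paper's proof avoids this detour entirely by using the planarity of $G/uv$ \emph{directly}: take a plane drawing of $G/uv$ and delete the contracted vertex $w$. This yields a plane drawing of $H=G-u-v$ in which all of $N_G(u)\cup N_G(v)$ lies on the boundary of a single face~$f$, and by \cref{FaceBoundaryCycle} that boundary is a cycle~$O$. Now let $H'\subseteq G$ consist of $O$ together with $u,v$ and all edges of $G$ at $u$ or~$v$. Then $G$ is planar iff $H'$ is: a plane drawing of $H'$ with $O$ as its outer cycle can be pasted into the face~$f$ of the drawing of $H$. Since $G$ is non-planar, so is $H'$; and since $H'\subseteq G$ and $G$ is minor-minimal non-planar, $H'=G$, whence $G-u-v=O$. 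This is a one-line application of minimality once you have the right embedding of $H$; no face-counting, no Whitney, no splitting argument.
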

\begin{proof}
Suppose that $G$ is internally 4-connected, and let $uv$ be an arbitrary edge of~$G$.
By assumption, the graph $G/e$ is planar, so the graph $G':=G-u-v$ has a drawing in which the neighbourhood $N(u,v)$ lies on the boundary of a single face~$f$.
Since $G$ is internally 4-connected, the graph $G'$ is 2-connected.
Hence by \cref{FaceBoundaryCycle}, the face $f$ is bounded by a cycle~$O$.
Let $H$ denote the subgraph of $G$ that is obtained from~$O$ by adding both vertices $u,v$ and all the edges in $G$ that are incident with~$u,v$.
Note that $G$ is planar if and only if $H$ is planar.
Hence $H$ is not planar.
By minimality of~$G$, we have $G=H$.
\end{proof}

\begin{lemma}\label{internalK5}
If an excluded minor for the class of planar graphs is internally 4-connected, then it must be isomorphic to~$K_5$.
\end{lemma}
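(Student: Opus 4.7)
The plan is to use \cref{cycleUV} very aggressively: applied to the single edge $uv$ it already pins down most of the edge set of $G$, but applied to every edge of the resulting cycle it turns into a rigid arithmetic constraint that forces $|V(G)|$ to be at most $6$, and the case $|V(G)| = 6$ can then be ruled out directly.

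First I would fix an arbitrary edge $uv$ of $G$ and, by \cref{cycleUV}, write $C := G - u - v$ as a cycle $x_1, \ldots, x_m$ in cyclic order, where $m = |V(G)| - 2 \geq 3$. Because $C$ is an induced cycle, every edge of $G$ is of one of the forms $uv$, a cycle edge of $C$, or $u x_i$, or $v x_i$. Writing $A := N_G(u) \cap V(C)$ and $B := N_G(v) \cap V(C)$, 3-connectedness together with $\deg_C(x_i) = 2$ forces $A \cup B = V(C)$.

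Next I would apply \cref{cycleUV} to each cycle edge $x_i x_{i+1}$. The resulting cycle $G - x_i - x_{i+1}$ has $u$ as a degree-$2$ vertex, but by direct count $u$'s degree there is $1 + |A \setminus \{x_i, x_{i+1}\}|$, since the edge $uv$ survives and $u$'s remaining neighbours lie in $A$. Hence $|A \setminus \{x_i, x_{i+1}\}| = 1$ for every $i$, and symmetrically for $B$. A short case analysis on $|A|$ rules out $|A| \leq 1$ and $|A| \geq 4$ and leaves exactly two possibilities: $|A| = 3$, which forces $A = V(C)$ and $m = 3$; or $|A| = 2$ with $A$ an antipodal pair of a $4$-cycle, which forces $m = 4$.

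If $m = 3$, then by symmetry $B = V(C)$ too, and putting all the edges together gives $G = K_5$. If $m = 4$, then $A$ and $B$ are both antipodal pairs of $C$ and satisfy $A \cup B = V(C)$, which forces them to be the two distinct antipodal pairs; writing out the adjacencies shows that $G$ is $K_{3,3}$. The main obstacle is then to discard this last case; I expect it to be ruled out cleanly by the very definition of internal 4-connectedness, since in $K_{3,3}$ each bipartition class is a 3-separator whose deletion leaves three isolated vertices rather than one vertex separated from a connected rest. Thus $K_{3,3}$ is not internally 4-connected, which contradicts the hypothesis on~$G$ and leaves $G = K_5$ as the only possibility.
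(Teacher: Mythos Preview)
Your proof is correct and follows essentially the same approach as the paper: apply \cref{cycleUV} first to $uv$, then to every edge of the resulting cycle, and use the degree constraints so obtained to force $G\in\{K_5,K_{3,3}\}$, discarding $K_{3,3}$ by internal 4-connectedness. The only cosmetic difference is that the paper extracts the bound $m\le 4$ by locating a degree-$2$ vertex of~$G$ on~$O$ when $m\ge 5$, whereas you extract it from the uniform constraint $|A\setminus\{x_i,x_{i+1}\}|=1$ on the degree of~$u$; the endgame for $m\in\{3,4\}$ is then identical.
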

\begin{proof}
Let $G$ be an excluded minor for planarity such that $G$ is internally 4-connected.
Let $uv$ be an edge of~$G$.
By \cref{cycleUV}, $O:=G-u-v$ is a cycle.

We claim that $O$ has length at most four.
Assume for a contradiction that $O$ has length at least five, and let $xy$ be an arbitrary edge on~$O$.
Then the path $O-x-y$ has at least one internal vertex, and we let $w$ be an arbitrary such vertex.
By \cref{cycleUV}, the graph $G-x-y$ is a cycle.
Hence the internal vertex $w$ of the path $O-x-y$ is not adjacent to $u$ or~$v$.
But then the graph obtained from $G$ by contracting an edge incident to the degree-two vertex~$w$ is not planar (as $G$ is not planar), contradicting that $G$ is an excluded minor for planarity.

So assume that $O$ has length at most four.
We apply \cref{cycleUV} to all edges of~$O$ in some linear order once around~$O$.
If $O$ has length four, we find that the neighbours in~$G$ of $u$ and of~$v$ on~$O$ form two disjoint independent 2-sets in~$O$.
But then $G$ is a $K_{3,3}$, contradicting that $G$ is internally 4-connected.
Hence $O$ has length three, in which case we find that $G$ is a~$K_5$.
\end{proof}

\begin{proof}
[Proof of Kuratowski's Theorem]
Containing $K_5$ or $K_{3,3}$ as a minor obstructs planarity by the usual arguments via Euler's formula.
For the hard implication, let $G$ be an excluded minor for planarity.
It is easy to see that $G$ is 2-connected: otherwise all blocks of $G$ are proper subgraphs of~$G$, and hence planar, so we obtain a drawing of~$G$ by combining the drawings of its blocks, a contradiction.
Similarly, we can show that $G$ is 3-connected: otherwise $G$ has a 2-separator $X$ that leaves components $C_i$, and each graph $H_i$ that is obtained from $G[C_i\cup X]$ by turning $X$ into a $K_2$ is a planar minor of~$G$, so the drawings of the $H_i$ combine to a drawing of~$G$, a contradiction.
Hence we may assume that $G$ is 3-connected.

If $G$ is internally 4-connected, then $G$ is a~$K_5$ by \cref{internalK5}.
Otherwise $G$ is a $K_{3,3}$ or $G$ has a claw-free 3-separation $\{A,B\}$.
In the latter case, let $G_A$ be obtained from $G[A]$ by turning the separator $X:=A\cap B$ into a triangle, and note that $G_A$ is a minor of $G$ by \cref{3sepClawChar}~\cref{C:twoCycles} and Menger's theorem.
Similarly, $G_B$ is a minor of~$G$.
Hence $G_A$ and $G_B$ have planar drawings.
These drawings can be combined into a drawing of~$G$ if both $G_A\sm X$ and $G_B\sm X$ are connected.
Otherwise $G$ contains $K_{3,3}$ as a minor, and must be equal to $K_{3,3}$ by minimality.
\end{proof}

\begin{ack}
    We thank the referee for carefully reading the paper, spotting a mistake in the proof of \cref{universality} and suggesting a shorter proof of \cref{v-path}.
\end{ack}

\bibliographystyle{amsplain}
\bibliography{literatur}
\end{document}